\documentclass[11pt]{article}
\usepackage{amsfonts,amssymb,amsthm,amsmath,latexsym,bm,enumerate,graphicx,subfigure,epsfig,epstopdf,color,caption}

\usepackage{verbatim}

\usepackage[T1]{fontenc}
\usepackage{authblk}

\usepackage[colorlinks,linktocpage,linkcolor=blue]{hyperref}
\usepackage{fancyhdr}
\usepackage{ulem}

\allowdisplaybreaks
\numberwithin{equation}{section}
\textwidth 160 mm \textheight 220 mm \hoffset -1.5cm \voffset-1.5cm

\newtheorem{thm}{Theorem}[section]
\newtheorem{lem}{Lemma}[section]
\newtheorem{cor}{Corollary}[section]

\newtheorem{problem}{Problem}[section]

\newtheorem{example}{Example}[section]

\setlength{\baselineskip}{17pt}
\begin{document}

\title{H\"{o}lder Stable Recovery of the Source in Space-Time Fractional Wave Equations}
\author[1]{Kuang Huang\thanks{kuanghuang@cuhk.edu.hk}}
\affil[1]{\normalsize{Department of Mathematics, The Chinese University of Hong Kong, Hong Kong}}

\author[2]{Zhiyuan Li\thanks{lizhiyuan@nbu.edu.cn}}
\affil[2]{\normalsize{School of Mathematics and Statistics, Ningbo University, China}}

\author[3,4]{Zhidong Zhang\thanks{zhangzhidong@mail.sysu.edu.cn}}
\affil[3]{\normalsize{School of Mathematics (Zhuhai), Sun Yat-sen University, Zhuhai 519082, Guangdong, China}}
\affil[4]{\normalsize{Guangdong Province Key Laboratory of Computational Science, Sun Yat-sen
University, Guangzhou 510000, Guangdong, China}}

\author[5]{Zhi Zhou\thanks{zhizhou@polyu.edu.hk}}
\affil[5]{\normalsize{Department of Applied Mathematics, The Hong Kong Polytechnic University,  Hong Kong.}} 

\maketitle

\textbf{Abstract:}  We study the recovery of a spatially dependent source in a one-dimensional space-time fractional wave equation using boundary measurement data collected at a single endpoint. The main challenge arises from the fact that the eigenfunctions of the Dirichlet eigenvalue problem do not form an orthogonal system, due to the presence of a fractional derivative in space. To address this difficulty, we introduce a bi-orthogonal basis for the Mittag--Leffler functions and use it to establish uniqueness and H\"older-type stability results, provided the measurement time is sufficiently large. A Tikhonov regularization method is then employed to numerically solve the inverse source problem. Several numerical examples are presented to demonstrate the accuracy and efficiency of the proposed method and to validate our theoretical findings.\vskip5pt

\textbf{Keywords:} fractional wave equation; inverse source problem; boundary measurement; uniqueness; stability; Mittag--Leffler functions

\section{Introduction and main results}

In this paper, letting $T>0$ and $\alpha,\beta\in (1,2)$ be fixed constants,  we will investigate the following initial-boundary value problem of time and space fractional wave equation:
\begin{equation}
\label{eq-gov}
\left\{
\begin{aligned}
&\left( \partial_{t}^{\alpha} - D_x^\beta \right) u(x,t)=\lambda(t) f(x), \quad (x,t)\in (0,1)\times(0,T],\\
&u(0,t)=u(1,t)=0, \quad t\in(0,T],\\
&u(x,0)=u_t(x,0)=0,\quad x\in(0,1),
\end{aligned}
\right.
\end{equation}
where $\partial_{t}^\alpha$ is the Caputo left-sided fractional derivative of order $\alpha\in(1, 2)$ defined by
\begin{equation*}
\partial_t^\alpha \psi(t)=\frac1{\Gamma(2-\alpha)}\int_0^t (t-\tau)^{1-\alpha} \psi''(\tau) d\tau,\quad 1<\alpha<2,
\end{equation*}
and by $D_x^\beta$ we denote the non-self adjoint operator of order $\beta\in(1, 2)$:
\begin{equation*}
D_x^\beta \varphi(x)=\frac1{\Gamma(2-\beta)} \frac{d}{dx}\int_0^x (x-\xi)^{1-\beta} \varphi'(\xi) d\xi,\quad 1<\beta<2,
\end{equation*}
in which $\Gamma(\cdot)$ denotes the Gamma function, and we refer to Podlubny \cite{Podlubny-1999} for more properties of the above fractional derivatives.

The governing equation in \eqref{eq-gov} is a time and space fractional diffusion equation of super-diffusion type. Owing to the applicability in describing the anomalous phenomena in various fields, there are many applications of time or space or space-time differential equations in various fields, including biology \cite{ionescu2017role, magin2010fractional}, physics \cite{del2004fractional, kulish2002application, povstenko2015linear}, chemical \cite{singh2017analysis}, finance \cite{tenreiro2016relative}, viscoelasticity processes \cite{mainardi2022fractional}.
The broad applicability of fractional wave equations has also sparked significant interest among mathematicians. For notable theoretical contributions, see, for example, \cite{gorenflo2000wright, Gorska:2025, Han:2020, kochubei2014asymptotic, kubica2020time, luchko2019subordination, sakamoto2011initial}.  For an extensive and up-to-date overview of numerical methods for fractional models, we recommend the monograph \cite{Handbook:2019,JinZhou:2023book}, as well as the survey articles \cite{Bonito:2018, Du:ACTA, JinLazarovZhou:overview, Lischke:2020} and the references therein.

On the basis of the aforementioned foundation of direct problems, there are tremendously many works on the inverse source problems for fractional partial differential equations, which have practical significance in fields such as medical imaging, environmental monitoring, and material science. 
In this paper, we assume $\lambda(t)$ is given and we use additional information at one endpoint to carry out the inversion of the space component in the source term. In particular, we are concerned with the following inverse problem:
\begin{problem}\label{prob-isp}
Let $\lambda\in C[0,T]$ be known with $\lambda(0)\neq0$.  We propose:
\begin{enumerate}
\item (Uniqueness) Given the boundary observation $\phi(t):=u_x(1,t)$, $t\in(0,T)$. We investigate whether the measurement uniquely determines the source term $f(x)$, $x\in\Omega$.
\item (Stability) Establish a (conditional) stability for the mapping $u_x(1,\cdot) \mapsto f$?
\item (Reconstruction) Design an efficient algorithm to recover the unknown source profile $f$.
\end{enumerate}
\end{problem}
Most existing studies on inverse source problems with boundary measurements have focused on the case where $0 < \alpha < 2$ and $\beta = 2$. For example, Zhang and Xu \cite{zhang-xu-2011} investigated an inverse source problem for a fractional diffusion equation with $\alpha = 1/2$ and $\beta = 2$, employing refined Carleman estimates. 
In \cite{KianYamamoto:2019}, Kian and Yamamoto studied the stable determination of a space-time dependent source term in (fractional) diffusion equations ($\alpha \in (0,1]$ and $\beta = 2$) within a cylindrical domain using boundary measurements. This analysis was later extended to fractional models with space-time dependent diffusion coefficients in \cite{JinKianZhou:2021} through a novel perturbation argument. The authors also developed a numerical algorithm for efficiently and accurately reconstructing the source component. 
Niu, Helin, and Zhang \cite{niu-helin-zhang-2020} applied a regularization technique to reconstruct random sources in stochastic fractional diffusion equations. Meanwhile, Jiang, Li, Liu, and Yamamoto \cite{jiang-li-liu-yamamoto-2017} reformulated the inverse source problem as an optimization problem, proposing an iterative thresholding method for its numerical solution. More recently, Janno and Kian \cite{janno-kian-2023} investigated inverse problems involving the determination of source terms that vary spatially or temporally, utilizing a posteriori boundary measurements.

The aforementioned studies are primarily concerned with the case where the fractional order $\alpha$ lies within the range $(0,1]$. However, theoretical advancements in identifying spatially varying sources for $\alpha \in (1,2)$ remain relatively scarce. Specifically, the uniqueness of identifying spatially dependent sources was established by applying the analytic nature of solutions and a novel unique continuation principle, under the assumption that all coefficients are time-independent, as detailed in \cite{Cheng2021uniqueness}. In another study, Liao and Wei \cite{liao-wei-2019} employed the Levenberg--Marquardt method to simultaneously reconstruct the fractional order and the spatial source term. Yan and Wei \cite{Yan2020determine} investigated the determination of a spatially dependent source term in a time-fractional diffusion-wave equation using partial noisy boundary data, establishing uniqueness via the Titchmarsh convolution theorem and the Duhamel principle. Liu, Hu and Yamamoto \cite{Liu2021inverse} studied the identification of moving source profiles in time-fractional diffusion-wave equations, confirming the unique determination of various source profiles by applying the unique continuation principle. Lassas, Li, and Zhang \cite{lassas-li-zhang-2023} explored the unique continuation principle for stochastic time-fractional diffusion-wave equations and its application to inverse source problems. Yamamoto \cite{Yamamoto2023uniqueness} examined the uniqueness of the inverse source problem for spatially varying factors by analyzing the decay of data as time approaches infinity, assuming that the source remains inactive during the observation period. The identification of additional parameters, such as fractional orders, initial conditions, and coefficients, is a rapidly growing area of research. For a comprehensive review on this topic, we refer readers to \cite{Jin2015turorial, LiLiuYamamoto2019inverse, LiYamamoto2019inverse, LiuLiYamamoto2019inverse}.

To the authors' knowledge, almost all studies on the inverse source problem for fractional equations have focused on establishing uniqueness results, primarily due to the lack of tools such as Carleman estimates. By assuming a temporally and spatially separable source term, we investigate the conditional stability of determining the source term based on one-end boundary observations. This represents a key innovative aspect of our article.


For giving our stability result, we introduce some notations and definitions. Fixing $\gamma\ge0$ and any $M>0$, we define an admissible set of unknown source component $f$ by 
$$
\mathcal U_{M,\gamma} := \left\{ \sum_{|n|=1}^\infty f_n X_n; |f_n| n^\gamma \le M,\ \forall n=1,2,\cdots  \right\}.
$$
Here $X_n$ is the eigenfunction of the operator $D_x^\beta$ and will be given later in the next section. 

Our main theorems in this article are as follows. 
\begin{thm}[Uniquely recovery] \label{thm-uniqueness}
Assuming $\alpha,\beta\in(1,2)$, $T>0$ and $u\in C([0,T]\times(0,1])$ satisfies the problem \eqref{eq-gov} with $f \in \mathcal U_{M,0}$ and $\lambda\in C^1[0,T]$. Then the source term $f$ can be uniquely determined by the observation data $\phi(t)$, that is, $f$ must be identically zero if $\phi=0$ in $(0,T)$.
\end{thm}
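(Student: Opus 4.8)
The plan is to reduce the uniqueness question to a statement purely about the generalized Fourier coefficients of $f$ in the bi-orthogonal eigenbasis, and then to separate those coefficients by a Laplace-transform/residue argument. Throughout I would use the spectral decomposition developed in the next section: the Dirichlet eigenvalue problem for $D_x^\beta$ admits eigenpairs $\{(\mu_n, X_n)\}$ with an associated bi-orthogonal family $\{Y_n\}$, so that every $f \in \mathcal{U}_{M,0}$ is written $f = \sum_n f_n X_n$ with $f_n = \langle f, Y_n\rangle$. Constructing the forward solution by separation of variables, I set $u(x,t) = \sum_n u_n(t) X_n(x)$ and project \eqref{eq-gov} against the dual basis, which decouples the PDE into the family of fractional ODEs
\begin{equation*}
\partial_t^\alpha u_n(t) + \mu_n u_n(t) = \lambda(t) f_n, \qquad u_n(0) = u_n'(0) = 0,
\end{equation*}
with Mittag--Leffler solution $u_n(t) = f_n \int_0^t (t-\tau)^{\alpha-1} E_{\alpha,\alpha}\bigl(-\mu_n (t-\tau)^\alpha\bigr)\, \lambda(\tau)\, d\tau$. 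Differentiating in $x$ and evaluating at the endpoint then presents the observation as a single temporal convolution,
\begin{equation*}
\phi(t) = u_x(1,t) = (K * \lambda)(t), \qquad K(s) := s^{\alpha-1}\sum_n f_n\, X_n'(1)\, E_{\alpha,\alpha}\bigl(-\mu_n s^\alpha\bigr).
\end{equation*}
At this stage I would check that the series converges (the $O(\mu_n^{-2})$ decay of $E_{\alpha,\alpha}(-\mu_n s^\alpha)$ for fixed $s>0$, combined with $f \in \mathcal{U}_{M,0}$, should suffice) and, crucially, that $K$ is real-analytic on $(0,\infty)$, each summand being analytic there.

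The central step must accommodate the fact that $\lambda$ is only $C^1$, so that $\phi$ itself need not be analytic and the data cannot be extended past $(0,T)$ directly. Instead I would invoke the Titchmarsh convolution theorem: from $\phi = K*\lambda \equiv 0$ on $(0,T)$ one obtains nonnegative numbers $a,b$ with $a+b \ge T$ such that $K \equiv 0$ on $(0,a)$ and $\lambda \equiv 0$ on $(0,b)$. Since $\lambda(0)\neq 0$ and $\lambda$ is continuous, $\lambda$ cannot vanish on any interval $(0,b)$ with $b>0$, forcing $b=0$ and hence $a\ge T$. Thus $K \equiv 0$ on $(0,T)$, and the analyticity of $K$ promotes this to $K \equiv 0$ on all of $(0,\infty)$.

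It remains to read off the coefficients via the Laplace transform. Using $\mathcal{L}\bigl[s^{\alpha-1}E_{\alpha,\alpha}(-\mu_n s^\alpha)\bigr](p) = (p^\alpha + \mu_n)^{-1}$, the identity $K\equiv 0$ yields
\begin{equation*}
\sum_n \frac{f_n\, X_n'(1)}{p^\alpha + \mu_n} = 0
\end{equation*}
for all sufficiently large $p$. Setting $z = p^\alpha$, the left-hand side is a meromorphic function of $z$ whose only singularities are poles at $z = -\mu_n$; since the eigenvalues $\mu_n$ are distinct (so the poles are simple and isolated), the identity theorem forces the function to vanish identically, and equating the residue at each pole to zero gives $f_n\, X_n'(1) = 0$ for every $n$. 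Provided $X_n'(1)\neq 0$ — a non-degeneracy I would verify from the explicit Mittag--Leffler representation of the eigenfunctions in the spectral section — this yields $f_n = 0$ for all $n$, i.e.\ $f \equiv 0$.

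I expect the main obstacle to lie in the interplay between the non-self-adjointness and the temporal argument: one must extract enough quantitative control from the bi-orthogonal system — convergence and analyticity of the kernel $K$, simplicity and separation of the eigenvalues $\mu_n$, and the non-vanishing $X_n'(1)\neq 0$ — to make the residue extraction rigorous. By contrast, the Titchmarsh step is precisely the device that lets the merely $C^1$ temporal profile $\lambda$ be handled without assuming analyticity of the boundary data, and the hypothesis $\lambda(0)\neq 0$ is exactly what closes that step.
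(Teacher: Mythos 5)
Your proposal follows essentially the same route as the paper's own proof: evaluate $u_x(1,t)$ as a convolution of $\lambda$ with a Mittag--Leffler kernel, invoke the Titchmarsh convolution theorem together with $\lambda(0)\neq 0$ and the analyticity of the kernel to conclude the kernel vanishes identically, then take Laplace transforms, substitute $z=p^\alpha$, and extract each coefficient by residues at the distinct eigenvalues. The single ingredient you defer, $X_n'(1)=E_{\beta,\beta-1}(\lambda_n)\neq 0$, is precisely the paper's Corollary \ref{cor-simple}, which the paper establishes not by direct inspection of the eigenfunctions but via the non-degeneracy of the bi-orthogonal pairing $\langle X_n,Y_n\rangle_{L_x^2(0,1)}$ in Lemmas \ref{lem-XY} and \ref{lem-XY'}.
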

Moreover, assuming $\alpha=\beta$, we can obtain a conditional stability result for our inverse source problem.
\begin{thm}[Conditional stability] \label{thm-stability}
Assuming $\alpha=\beta\in(1,2)$, $T\ge1$ and $u\in C([0,T]\times(0,1])$ satisfies the problem \eqref{eq-gov} with $f \in C[0,1] \cap \mathcal U_{M,\gamma}$,  $\gamma>2-\alpha$ and $\lambda\in C^1[0,T]$. Then the source term $f$ can be estimated by
$$
\|f\|_{C[0,1]} \le C\|\phi\|_{C[0,1]}^\theta,
$$
where $\theta\in (0,1)$ and $C>0$ are constants depending only on $\alpha,\gamma,M,T,\|\lambda\|_{C^1[0,T]}$.
\end{thm}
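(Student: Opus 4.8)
The plan is to reduce the inverse problem to a sequence of scalar moment problems through the bi-orthogonal eigenfunction expansion, to recover each modal coefficient of $f$ with a stability constant that grows only polynomially in the mode index, and then to balance a truncated reconstruction against the a priori bound defining $\mathcal U_{M,\gamma}$ so as to extract a genuine H\"older exponent $\theta\in(0,1)$.

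First I would expand in the eigenbasis $\{X_n\}$ of $D_x^\beta$. Writing $f=\sum_n f_n X_n$ and $u(x,t)=\sum_n u_n(t)X_n(x)$, equation \eqref{eq-gov} decouples into the scalar fractional ODEs $\partial_t^\alpha u_n+\mu_n u_n=\lambda(t)f_n$ with $u_n(0)=u_n'(0)=0$ (the eigenvalues $\mu_n$ occurring in complex conjugate pairs, hence the indexing by $|n|$), whose Duhamel solution is
\[
u_n(t)=f_n\int_0^t (t-s)^{\alpha-1}E_{\alpha,\alpha}\!\big(-\mu_n (t-s)^\alpha\big)\lambda(s)\,ds .
\]
Because $X_n(x)=x^{\alpha-1}E_{\alpha,\alpha}(-\mu_n x^\alpha)$ when $\alpha=\beta$, the differentiation identity for the Mittag--Leffler functions gives $X_n'(1)=E_{\alpha,\alpha-1}(-\mu_n)$, so the data take the form
\[
\phi(t)=\sum_n f_n\,E_{\alpha,\alpha-1}(-\mu_n)\,h_n(t),\qquad h_n(t)=\int_0^t (t-s)^{\alpha-1}E_{\alpha,\alpha}\!\big(-\mu_n(t-s)^\alpha\big)\lambda(s)\,ds .
\]
Thus $f$ enters the measurement only through the temporal profiles $h_n$, and the task becomes the stable inversion of this temporal map.

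The technical heart is a quantitative extraction of the coefficients $c_n:=f_n E_{\alpha,\alpha-1}(-\mu_n)$. Observe that $h_n=\lambda * k_n$ with $k_n(t)=t^{\alpha-1}E_{\alpha,\alpha}(-\mu_n t^\alpha)$, and that on the interval $[0,1]$ one has $k_n(t)=X_n(t)$; this is the precise point at which the hypothesis $\alpha=\beta$ is used, for it makes the temporal system coincide with the spatial eigenfunction system, so that the bi-orthogonal family $\{Y_n\}$ dual to $\{X_n\}$ can be reused in time. Since $\lambda(0)\neq0$, convolution against $\lambda$ is an invertible Volterra operator of the second kind on $C[0,1]$; composing its (adjoint) inverse with $Y_n$ produces test functionals $\eta_n$ with $\int_0^1 \eta_n h_m=\delta_{nm}$, and I would bound their norms by $\|\eta_n\|\lesssim n^{q}$ using the eigenvalue asymptotics $\mu_n\sim(cn)^\alpha$ together with the growth estimate for $\{Y_n\}$. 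The requirement $T\ge1$ enters here: it guarantees that the data $\phi$ are available on all of $[0,1]$, the minimal window on which this bi-orthogonal extraction closes. Pairing $\eta_n$ with $\phi$ and using the lower bound $|E_{\alpha,\alpha-1}(-\mu_n)|\gtrsim n^{-p}$ then yields $|c_n|\lesssim n^{q}\|\phi\|_{C[0,1]}$ and hence $|f_n|\lesssim n^{q+p}\|\phi\|_{C[0,1]}$.

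Finally I would split $f=\sum_{|n|\le N}f_n X_n+\sum_{|n|>N}f_n X_n$. The low-frequency part is controlled by the previous step, $\big\|\sum_{|n|\le N}f_n X_n\big\|_{C[0,1]}\lesssim N^{r}\|\phi\|_{C[0,1]}$ for a fixed power $r$, while the high-frequency tail is controlled purely by the a priori bound, $\big\|\sum_{|n|>N}f_n X_n\big\|_{C[0,1]}\le M\sum_{|n|>N}n^{-\gamma}\|X_n\|_{C[0,1]}\lesssim M N^{-s}$, where the convergence and the decay rate $s>0$ are exactly what force the restriction $\gamma>2-\alpha$. Optimizing the cutoff by choosing $N\sim(M/\|\phi\|_{C[0,1]})^{1/(r+s)}$ balances the two contributions and produces $\|f\|_{C[0,1]}\le C\,M^{1-\theta}\|\phi\|_{C[0,1]}^{\theta}$ with $\theta=s/(r+s)\in(0,1)$, which is the claimed estimate. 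The main obstacle is the extraction step: because $D_x^\beta$ is non-self-adjoint the family $\{X_n\}$ is not orthogonal and Parseval is unavailable, so the stable recovery of each $c_n$ must rest entirely on sharp asymptotics and non-vanishing lower bounds for the Mittag--Leffler functions and on the controlled growth of the bi-orthogonal dual system --- precisely where $\alpha=\beta$ and $T\ge1$ are indispensable.
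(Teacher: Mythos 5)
Your proposal follows essentially the same route as the paper: the same observation that $\alpha=\beta$ makes the temporal kernel $\tau^{\alpha-1}E_{\alpha,\alpha}(\lambda_n\tau^\alpha)$ coincide with $X_n(\tau)$, the same dual system built by applying the adjoint of the inverse convolution operator to $Y_n$ (the paper's $Z_n=(K^{-1})^*Y_n$, invoking Yamamoto's invertibility result, which is why $T\ge1$ and $\lambda(0)\neq0$ enter), the same Mittag--Leffler lower bounds (for $E_{\alpha,\alpha-1}(\lambda_n)$ and for $\langle X_n,Y_n\rangle_{L_x^2(0,1)}$), and the same truncation-and-balancing argument yielding $\theta=s/(r+s)$. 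Your outline is correct in structure and names all the essential ingredients, matching the paper's proof up to the explicit exponents.
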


The outline of this article is as follows: In Section \ref{sec-bi}, we present auxiliary results, focusing primarily on the bi-orthogonal system associated with the fractional operator $\partial_x^\alpha$. These results, combined with the refined analysis of the direct problem established in Section \ref{sec-forward}, lead to the proof of the main theorems, which is provided in Section \ref{sec-proof}. A numerical scheme for the proposed source term determination is introduced in Section \ref{sec-num}. Finally, concluding remarks are presented in Section \ref{sec-con}.

\section{Bi-orthogonal system}\label{sec-bi}
In this section, we will first present several key properties related to the zeros of the Mittag--Leffler function $E_{\alpha,\beta}(z)$, which is defined as:
\begin{equation}\label{def-ml}
E_{\alpha,\beta}(z):= \sum_{k=0}^\infty \frac{z^k}{\Gamma(\alpha k+\beta)},\quad z\in \mathbb{C}.
\end{equation}
Subsequently, we will construct a bi-orthogonal system derived from the associated spectral problem and its adjoint problem, which will play a crucial role in the forthcoming discussions.

To achieve this, using the method of separation of variables, we introduce the spectral problem associated with the governing equation \eqref{eq-gov}:
\begin{equation}
\label{eq-eigen}
\left\{
\begin{aligned}
& D_x^\beta X(x) = \lambda X(x), \quad x \in (0,1),\\
& X(0) = X(1) = 0.
\end{aligned}
\right.
\end{equation}
The spectral problem has been studied extensively in many existing works; see, e.g., \cite{Aleroev2013BoundaryvaluePF, JinLazarovPasciakZhou:2014, Jin2015turorial}. The eigenfunctions of the spectral problem are given by:
\begin{equation}\label{def-Xn}
    X_n(x) = \Big\{ x^{\beta-1} E_{\beta,\beta}(\lambda_n x^\beta) \Big\}_{|n|=1}^\infty,
\end{equation}
where $\lambda_n\in\mathbb C$ denotes the eigenvalues associated with the spectral problem. These eigenvalues $\lambda_n$ are the zeros of the Mittag--Leffler function $E_{\beta,\beta}(\lambda)$, satisfying the property $\overline{\lambda_n} = \lambda_{-n}$.

Our analysis critically depends on the asymptotic behavior and distribution of the eigenvalues $\lambda_n$, as described in the following lemma. For detailed proofs, we refer the reader to \cite[Theorems 2.1.1 and 4.2.1]{2013Distribution}.

\begin{lem}
\label{lem-eigen}
Letting $\beta\in (1,2)$, then the eigenvalues $\lambda_n$, that are the zeros of the function $E_{\beta,\beta}(z)$, satisfy the following relations
\begin{enumerate}
\item $|\arg \lambda_n|>\frac{\beta\pi}2$, $n=\pm1,\pm2,\cdots$;
\item $0<|\lambda_{|n|}| < |\lambda_{|n|+1}|$, $n=\pm1,\pm2,\cdots$;
\item $|\lambda_n|\sim O(|n|^\beta)$  as $n$ tends to infinity.
\end{enumerate}
\end{lem}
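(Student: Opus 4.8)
The plan is to derive all three assertions from the classical large-$z$ asymptotics of the Mittag--Leffler function (of Wiman--Dzhrbashyan type) together with a balancing argument that locates the zeros. Concretely, I would start from the two-sided expansion valid for $0<\beta<2$: fixing any $\mu\in(\beta\pi/2,\pi)$, one has as $|z|\to\infty$
\[
E_{\beta,\beta}(z)=\frac{1}{\beta}\, z^{(1-\beta)/\beta}\exp\bigl(z^{1/\beta}\bigr)-\sum_{k=1}^{N}\frac{z^{-k}}{\Gamma(\beta-\beta k)}+O\bigl(|z|^{-N-1}\bigr),\qquad |\arg z|\le\mu,
\]
while for $\mu\le|\arg z|\le\pi$ only the algebraically decaying sum survives. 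The decisive structural feature is that the first algebraic coefficient vanishes, since $1/\Gamma(\beta-\beta)=1/\Gamma(0)=0$, so that the leading algebraic term is the $k=2$ term $z^{-2}/\Gamma(-\beta)$, with $\Gamma(-\beta)$ finite and nonzero because $-\beta\in(-2,-1)$.

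For assertion (1) I would examine the modulus of the exponential factor: writing $w=z^{1/\beta}$, one has $|\exp(z^{1/\beta})|=\exp\bigl(|z|^{1/\beta}\cos(\arg z/\beta)\bigr)$, which is exponentially large precisely when $|\arg z|<\beta\pi/2$ and exponentially small when $|\arg z|>\beta\pi/2$. Hence in any closed subsector $|\arg z|\le\beta\pi/2-\delta$ the exponential term dominates every other term for $|z|$ large, so $E_{\beta,\beta}$ cannot vanish there, and all large zeros satisfy $|\arg z|\ge\beta\pi/2$. To upgrade this to the \emph{strict} inequality I would work in the transition sector $\beta\pi/2\le|\arg z|\le\mu$, equate the two surviving leading terms, $\tfrac{1}{\beta}z^{(1-\beta)/\beta}e^{z^{1/\beta}}\sim z^{-2}/\Gamma(-\beta)$, and read off the sign of $\operatorname{Re}w$ along the resulting locus.

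For assertions (2) and (3) I would solve this balancing equation asymptotically. Taking logarithms and using $\log z=\beta\log w$ gives $w=z^{1/\beta}=2\pi i m-(\beta+1)\log w+\mathrm{const}+o(1)$ for integer $m\to+\infty$, whose solution is $w_m=2\pi i m-(\beta+1)\log(2\pi i m)+O(1)$. In particular $\operatorname{Re}w_m\sim-(\beta+1)\log(2\pi m)<0$, which places $w_m$ just past the positive imaginary axis, hence $z_m=w_m^\beta$ just past the ray $\arg z=\beta\pi/2$; this certifies the strict inequality in (1). Inverting, $z_m=w_m^\beta=(2\pi m)^\beta e^{i\beta\pi/2}\bigl(1+o(1)\bigr)$, so $|z_m|\sim(2\pi)^\beta m^\beta$, which is exactly (3) after matching the enumeration index $m$ to $|n|$; the conjugate zeros near $-\beta\pi/2$ supply the negative indices through $\overline{\lambda_n}=\lambda_{-n}$. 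The strict monotonicity $|\lambda_{|n|}|<|\lambda_{|n|+1}|$ for large $|n|$ is then immediate from the strictly increasing asymptotics $|z_m|\sim(2\pi)^\beta m^\beta$.

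The main obstacle is converting the heuristic balancing into a rigorous one-to-one enumeration of the roots. I would make this precise via the argument principle, applying Rouch\'e's theorem on a family of contours (for instance the boundaries of thin annular sectors centered on each predicted root $z_m$) to show that $E_{\beta,\beta}$ and its exponential-plus-$z^{-2}$ leading approximation enclose the same number of zeros, thereby pinning exactly one simple zero near each $z_m$ and certifying both the enumeration and the strict ordering for \emph{all} indices. The finitely many smallest zeros, where the asymptotics are not yet accurate, would be handled separately by a direct winding-number count; this is precisely the point at which I would instead invoke the sharp root-distribution results of \cite{2013Distribution}.
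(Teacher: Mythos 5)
The paper offers no proof of this lemma at all: it simply cites Theorems 2.1.1 and 4.2.1 of \cite{2013Distribution}, so there is no in-paper argument to compare against. Your sketch reconstructs the standard Sedletskii-type proof that lies behind those cited theorems, and its self-contained part checks out. The structural observation that the $k=1$ algebraic term vanishes because $1/\Gamma(\beta-\beta)=1/\Gamma(0)=0$, so that the exponential term must balance $z^{-2}/\Gamma(-\beta)$ (with $-\beta\in(-2,-1)$, hence $\Gamma(-\beta)$ finite and nonzero), is exactly right; your bookkeeping in the balancing equation is also correct, since the exponents combine as $-(1-\beta)-2\beta=-(\beta+1)$, giving $w=2\pi i m-(\beta+1)\log w+\mathrm{const}+o(1)$, whence $\operatorname{Re}w_m\sim-(\beta+1)\log(2\pi m)<0$, the strict inequality $|\arg z_m|>\beta\pi/2$, and $|z_m|\sim(2\pi m)^\beta$. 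This yields all three assertions for sufficiently large $|n|$, and Rouch\'e's theorem on contours around the predicted roots is indeed the standard way to make the enumeration and simplicity rigorous in that regime.

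The one substantive caveat: assertions (1) and (2) are global statements about \emph{every} zero, including the finitely many small ones, and no asymptotic expansion can reach those. Your proposed ``direct winding-number count'' for the small zeros is a gesture rather than a proof --- verifying the strict sector inequality $|\arg\lambda_n|>\beta\pi/2$ and the strict modulus ordering at low indices is precisely the hard content of Theorem 2.1.1 of \cite{2013Distribution}, and there is no soft argument that substitutes for it. You candidly fall back on that reference at exactly this point, which means your proposal ultimately leans on the same citation the paper does; that is acceptable given the paper's own treatment, but you should state clearly that the low-index portions of (1) and (2) are imported from \cite{2013Distribution} rather than established by your asymptotic analysis.
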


In the case of $\beta = 2$, with the help of Dirichlet eigensystem $\{\lambda_n,X_n(\cdot)\}_{|n|=1}^\infty$ of the problem \eqref{eq-eigen}, Sakamoto and Yamamoto \cite{Sakamoto+Yamamoto-2011} proved the unique existence of the solution to \eqref{eq-gov}, gave several regularity estimates, and analyzed some related inverse problems. However, as pointed out in Section 3 of \cite{Aleroev2013BoundaryvaluePF}, the set $\{X_n(\cdot)\}_{|n|=1}^\infty$ is complete in $L^2(0,1)$ but not orthogonal, which makes the approaches based on eigenfunction expansion in \cite{Sakamoto+Yamamoto-2011} cannot work anymore. This is one of the main difficulties in our paper.

For this, we consider the adjoint problem of the spectral problem \eqref{eq-eigen},
\begin{equation}
\label{adjoint-eigen}
\left\{
\begin{aligned}
& {^BD_x^\beta} X(x) = \lambda X(x), \quad x\in (0,1),\\
&X(0)=X(1)=0,
\end{aligned}
\right.
\end{equation}
where ${^BD_x^\beta}$ denotes the backward Riemann-Liouville derivative defined by
$$
{^BD_x^\beta} \varphi(x) = \frac1{\Gamma(2-\beta)} \frac{d}{dx}\int_x^1  (\xi - x)^{1-\beta} \varphi'(\xi) d\xi.
$$
The adjoint problem \eqref{adjoint-eigen} has eigenfunctions $Y_n(x)$ corresponding to the conjugate $\overline\lambda_n$ as that of the problem \eqref{eq-eigen}, where 
\begin{equation}\label{def-Yn}
Y_n(x) = \overline{X_n(1-x)} = (1-x)^{\beta-1} E_{\beta,\beta}(\overline{\lambda_n} (1-x)^\beta),\quad n=\pm1,\pm2,\cdots.
\end{equation}
The sets $\{X_n(\cdot)\}_{|n|=1}^\infty$ and $\{Y_n(\cdot)\}_{|n|=1}^\infty$ form a bi-orthogonal system of functions, that is, 
$$
\langle X_n, Y_m\rangle_{L_x^2(0,1)} := \int_0^1 X_n(x) \overline{Y_m(x)} dx = 0,\quad m\ne n,
$$
which can be proved in view of integration by parts, see also e.g., \cite{Aleroev2013BoundaryvaluePF}. Moreover, we have the following lemmata related to the bi-orthogonal systems $\{X_n\}_{|n|=1}^\infty$ and $\{Y_n\}_{|n|=1}^\infty$. First, we have that $\langle X_n, Y_n \rangle_{L_x^2(0,1)}$ is not vanished for any $n=\pm1,\pm2,\cdots$. 
\begin{lem}\label{lem-XY}
Letting $X_n$ and $Y_n$ be defined by \eqref{def-Xn} and \eqref{def-Yn} separately. The $L^2(0,1)$-inner product $\langle X_n,Y_n \rangle_{L_x^2(0,1)} $ admits
$$
\langle X_n,Y_n \rangle_{L_x^2(0,1)} \neq 0,\quad \forall n=\pm1,\pm2,\cdots.
$$
\end{lem}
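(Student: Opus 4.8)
My plan is to read off the non-degeneracy of the diagonal pairing directly from the bi-orthogonality already recorded, combined with the completeness of $\{X_n\}_{|n|=1}^\infty$ in $L^2(0,1)$, and then to corroborate it by an explicit evaluation that pinpoints where the genuine difficulty lies.

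First I would record the reflection identity. By \eqref{def-Yn} one has $\overline{Y_n(x)}=X_n(1-x)$, so that
\begin{equation*}
\langle X_n,Y_n\rangle_{L_x^2(0,1)}=\int_0^1 X_n(x)\,X_n(1-x)\,dx .
\end{equation*}
The reflection $Pf(x):=f(1-x)$ is a linear isometric bijection of $L^2(0,1)$ and complex conjugation $Cf:=\overline f$ is an antilinear isometric bijection; since $\{X_n\}_{|n|=1}^\infty$ is complete in $L^2(0,1)$ (cf.\ \cite{Aleroev2013BoundaryvaluePF}), applying the isometric bijection $C\circ P$, which sends $X_n\mapsto \overline{X_n(1-\cdot)}=Y_n$, shows that $\{Y_n\}_{|n|=1}^\infty$ is complete as well.

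The main step is then a short argument by contradiction. Suppose $\langle X_{n_0},Y_{n_0}\rangle_{L_x^2(0,1)}=0$ for some index $n_0$. Combined with the stated bi-orthogonality $\langle X_{n_0},Y_m\rangle_{L_x^2(0,1)}=0$ for $m\neq n_0$, this gives $\langle X_{n_0},Y_m\rangle_{L_x^2(0,1)}=0$ for \emph{every} $m$, i.e.\ $X_{n_0}$ is orthogonal to the closed linear span of $\{Y_m\}_{|m|=1}^\infty$. By the completeness just established, that span is all of $L^2(0,1)$, whence $X_{n_0}\equiv 0$, contradicting that $X_{n_0}$ is a nontrivial eigenfunction. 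Hence $\langle X_n,Y_n\rangle_{L_x^2(0,1)}\neq 0$ for all $n$. The only nontrivial ingredient here is the completeness transfer, which is routine once completeness of $\{X_n\}$ is in hand.

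It is instructive to make the pairing explicit, since this isolates the real obstacle along an alternative route. Writing the integral as the Laplace convolution $(X_n\ast X_n)(1)$ and using $\mathcal{L}\{x^{\beta-1}E_{\beta,\beta}(\lambda_n x^\beta)\}(s)=(s^\beta-\lambda_n)^{-1}$ together with $\partial_\lambda(s^\beta-\lambda)^{-1}=(s^\beta-\lambda)^{-2}$, inversion yields
\begin{equation*}
\langle X_n,Y_n\rangle_{L_x^2(0,1)}=(X_n\ast X_n)(1)=E_{\beta,\beta}'(\lambda_n),
\end{equation*}
the interchange of $\partial_\lambda$ with the inverse transform being justified by local uniform convergence of the defining series. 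From this representation the claim becomes equivalent to the simplicity of the zeros $\lambda_n$ of $E_{\beta,\beta}$, and it is this simplicity, not the computation, that is the genuine hurdle on this route; it can be supplied either from the distribution theory of \cite{2013Distribution}, or via the recurrence $\beta\lambda_n E_{\beta,\beta}'(\lambda_n)=E_{\beta,\beta-1}(\lambda_n)$ together with Lemma \ref{lem-eigen} by excluding common zeros of $E_{\beta,\beta}$ and $E_{\beta,\beta-1}$. I would present the completeness argument as the main proof, since it circumvents the simplicity question altogether, and keep the explicit formula as a remark.
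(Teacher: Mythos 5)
Your main argument is correct and is essentially the paper's own proof: the same contradiction via bi-orthogonality plus completeness, except the paper runs it in the mirror direction---concluding $Y_{n_0}\equiv 0$ from $\langle X_n, Y_{n_0}\rangle_{L_x^2(0,1)}=0$ for all $n$ together with the completeness of $\{X_n\}_{|n|=1}^\infty$, which avoids your (correct but extra) step of transferring completeness to $\{Y_n\}_{|n|=1}^\infty$ through the reflection--conjugation isometry. Your closing remark that the pairing equals $E_{\beta,\beta}'(\lambda_n)$, so that nonvanishing amounts to simplicity of the zeros via the identity $\beta\lambda_n E_{\beta,\beta}'(\lambda_n)=E_{\beta,\beta-1}(\lambda_n)$, accurately anticipates what the paper establishes separately in Lemma \ref{lem-XY'}, Lemma \ref{lem-XnYn} and Corollary \ref{cor-simple}.
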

\begin{proof}
If not, there exists an integer $n_0$ such that $\langle X_{n_0},Y_{n_0} \rangle_{L_x^2(0,1)} = 0$. On the other hand, since the sets $\{X_n\}_{|n|=1}^\infty$ and $\{Y_n\}_{|n|=1}^\infty$ form a bi-orthogonal system of functions, we see that $\langle X_n,Y_{n_0}\rangle_{L_x^2(0,1)}=0$ for any $n=\pm1,\pm2,\cdots$. We must have $Y_{n_0}=0$ in $(0,1)$ in view of the completeness of the set $\{X_n\}_{|n|=1}^\infty$. But it is obvious that $Y_n$ is not identically zero in $(0,1)$. We finish the proof of the lemma. 
\end{proof}
Moreover, the following lemma gives a representation of $\langle X_n,Y_n \rangle_{L_x^2(0,1)}$.
\begin{lem}\label{lem-XY'}
Under the same settings and assumptions in Lemma \ref{lem-XY}, then the $L^2(0,1)$-inner product $\langle X_n,Y_n \rangle_{L_x^2(0,1)} $ can be rephrased as follows 
$$
\langle X_n,Y_n \rangle_{L_x^2(0,1)} = \sum_{k=1}^\infty \frac{k\lambda_n^{k-1}}{\Gamma(\beta k+\beta)}
$$
for any $n=\pm1,\pm2,\cdots$. Here, $\lambda_n\in\mathbb C$ are the zeros of the Mittag--Leffler function $E_{\beta,\beta}(\cdot)$.
\end{lem}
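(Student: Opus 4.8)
The plan is to compute the inner product directly from the series definitions of the Mittag--Leffler functions and to recognize the resulting spatial integral as a Beta function. First I would use the definition \eqref{def-Yn} to observe that, since the Taylor coefficients of $E_{\beta,\beta}$ are real, conjugation acts only on the argument, giving $\overline{Y_n(x)} = (1-x)^{\beta-1} E_{\beta,\beta}(\lambda_n (1-x)^\beta)$. Substituting this together with \eqref{def-Xn} into the definition of the inner product yields
\[
\langle X_n, Y_n \rangle_{L_x^2(0,1)} = \int_0^1 x^{\beta-1}(1-x)^{\beta-1}\, E_{\beta,\beta}(\lambda_n x^\beta)\, E_{\beta,\beta}(\lambda_n (1-x)^\beta)\, dx.
\]

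Next I would insert the power series \eqref{def-ml} for each factor, writing $E_{\beta,\beta}(\lambda_n x^\beta) = \sum_{j=0}^\infty \lambda_n^j x^{\beta j}/\Gamma(\beta j + \beta)$ and likewise for the $(1-x)$ factor. Since $E_{\beta,\beta}$ is an entire function, both series converge absolutely and uniformly on $[0,1]$; this is exactly the fact I would invoke to interchange the double summation with the integration. After the interchange, the spatial integral reduces to a Beta integral,
\[
\int_0^1 x^{\beta(j+1)-1}(1-x)^{\beta(l+1)-1}\, dx = B\big(\beta(j+1),\, \beta(l+1)\big) = \frac{\Gamma(\beta j + \beta)\,\Gamma(\beta l + \beta)}{\Gamma\big(\beta(j+l+2)\big)},
\]
whose Gamma factors cancel precisely against the denominators produced by the two Mittag--Leffler expansions, leaving the double sum $\sum_{j,l\ge0} \lambda_n^{\,j+l}/\Gamma(\beta(j+l+2))$.

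The final step is to reindex this double sum along its diagonals. Setting $k = j+l+1$, each integer $k \ge 1$ is realized by exactly $k$ ordered pairs $(j,l)$ with $j+l = k-1$, while $\Gamma(\beta(j+l+2)) = \Gamma(\beta k + \beta)$ is constant along each diagonal; collecting the $k$ equal terms therefore produces the claimed identity $\langle X_n, Y_n\rangle_{L_x^2(0,1)} = \sum_{k=1}^\infty k\,\lambda_n^{k-1}/\Gamma(\beta k + \beta)$.

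I expect the only genuine obstacle to be the justification of term-by-term integration of the product of two infinite series. This is resolved by the absolute and locally uniform convergence of the entire function $E_{\beta,\beta}$: the majorant double series is finite, since performing the same diagonal collection with $|\lambda_n|$ in place of $\lambda_n$ gives $\sum_{k\ge1} k\,|\lambda_n|^{k-1}/\Gamma(\beta k + \beta) = E_{\beta,\beta}'(|\lambda_n|) < \infty$, so Tonelli's theorem legitimizes the interchange and every manipulation above is rigorous.
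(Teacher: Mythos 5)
Your proof is correct and takes essentially the same route as the paper's: expand both Mittag--Leffler factors as power series, interchange summation and integration, evaluate the Beta integral $\int_0^1 x^{\beta(i+1)-1}(1-x)^{\beta(j+1)-1}\,dx = \Gamma(\beta i+\beta)\Gamma(\beta j+\beta)/\Gamma(\beta(i+j+2))$, and collect the diagonal terms (the paper performs the diagonal grouping as a Cauchy product before integrating, which is the same computation in a different order). If anything, you are slightly more careful than the paper: your explicit Tonelli majorant $E_{\beta,\beta}'(|\lambda_n|)<\infty$ justifies the interchange that the paper merely attributes to ``the Fubini lemma,'' and your observation that conjugating $Y_n$ turns $\overline{\lambda_n}$ back into $\lambda_n$ addresses a point the paper glosses over (its displayed integrand even carries a stray sign, writing $E_{\beta,\beta}(-\lambda_n x^\beta)$ where the definitions give $E_{\beta,\beta}(\lambda_n x^\beta)$).
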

\begin{proof}
From the definition of the Mittag--Leffler functions, by a direct calculation, we can obtain 
\begin{equation}
\label{eq-XY}
\begin{aligned}
\langle X_n,Y_n \rangle_{L_x^2(0,1)} 
=& \int_0^1 x^{\beta-1} E_{\beta,\beta}(-\lambda_n x^\beta) (1-x)^{\beta-1} E_{\beta,\beta}(-\lambda_n (1-x)^\beta) dx
\\
=& \int_0^1 \sum_{i=0}^\infty \frac{\lambda_n^i x^{\beta(i+1)-1}}{\Gamma(\beta i+\beta)} \sum_{j=0}^\infty \frac{\lambda_n^j (1-x)^{\beta(j+1)-1}}{\Gamma(\beta j+\beta)}dx.
\end{aligned}
\end{equation}
There is a product of two infinite series in the above integration, which can be dealt with via the Cauchy type product of series as follows
$$
\sum_{i=0}^\infty \frac{\lambda_n^i x^{\beta(i+1)-1}}{\Gamma(\beta i+\beta)} \sum_{j=0}^\infty \frac{\lambda_n^j (1-x)^{\beta(j+1)-1}}{\Gamma(\beta j+\beta)}
=\sum_{k=0}^\infty \lambda_n^k \sum_{i+j=k} \frac{x^{\beta(i+1)-1} (1-x)^{\beta(j+1)-1} }{ \Gamma(\beta i+\beta) \Gamma(\beta j+\beta)}.
$$
Substitute the above formula into \eqref{eq-XY}, and by the Fubini lemma, we further see that
$$
\langle X_n,Y_n \rangle_{L_x^2(0,1)} = 
\sum_{k=0}^\infty \lambda_n^k \sum_{i+j=k}  \frac{\int_0^1 x^{\beta(i+1)-1} (1-x)^{\beta(j+1)-1} dx}{ \Gamma(\beta i+\beta) \Gamma(\beta j+\beta)},
$$
which combined with the fact that
$$
\int_0^1 x^{\beta(i+1)-1} (1-x)^{\beta(j+1)-1} dx
=\frac{\Gamma(\beta i+\beta)\Gamma(\beta j + \beta)}{\Gamma(\beta(i+j+2))}
$$
implies that
$$
\langle X_n,Y_n \rangle_{L_x^2(0,1)} =
\sum_{k=1}^\infty \frac{k\lambda_n^{k-1}}{\Gamma(\beta k+\beta)},\quad \forall n=\pm1,\pm2,\cdots.
$$
This completes the proof of the lemma.
\end{proof}

Based on the formula in Lemma \ref{lem-XY}, we can further give a lower bound for $|\langle X_n,Y_n\rangle_{L_x^2(0,1)}|$. We have
\begin{lem}\label{lem-XnYn}
Assume $\lambda_n\in\mathbb C$ is the eigenvalue of the spectral problem \eqref{eq-eigen} such that $E_{\alpha,\alpha}(\lambda_n)=0$, $n=\pm1,\pm2,\cdots$. Then $|\langle X_n,Y_n\rangle_{L_x^2(0,1)}|$ admits a lower bound estimate:
\begin{equation}
\label{esti-XY}
|\langle X_n,Y_n\rangle_{L_x^2(0,1)}| \ge \frac{C}{|\lambda_n|^3},\quad \forall n=\pm1,\pm2,\cdots,
\end{equation}
where the constant $C$ is positive and is independent of $n$, but may depends on $\beta$.
\end{lem}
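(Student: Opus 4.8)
The starting point is to recognize the series in Lemma \ref{lem-XY'} as a derivative. Since $E_{\beta,\beta}(z)=\sum_{k\ge 0}z^{k}/\Gamma(\beta k+\beta)$ by \eqref{def-ml}, differentiating term by term gives $\sum_{k\ge 1}k\lambda_n^{k-1}/\Gamma(\beta k+\beta)=E_{\beta,\beta}'(\lambda_n)$, so Lemma \ref{lem-XY'} says nothing but $\langle X_n,Y_n\rangle_{L^2_x(0,1)}=E_{\beta,\beta}'(\lambda_n)$. The plan is therefore to quantify how fast $E_{\beta,\beta}'$ decays at its own zeros $\lambda_n$. Rather than differentiate asymptotics directly, I would invoke the standard Mittag--Leffler differentiation identity $\beta z\,E_{\beta,\beta}'(z)=E_{\beta,\beta-1}(z)-(\beta-1)E_{\beta,\beta}(z)$ (checked term by term from \eqref{def-ml}) and evaluate it at $z=\lambda_n$. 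Because $E_{\beta,\beta}(\lambda_n)=0$, the second term vanishes and I obtain the exact identity $E_{\beta,\beta}'(\lambda_n)=E_{\beta,\beta-1}(\lambda_n)/(\beta\lambda_n)$. Combined with $|\lambda_n|\sim|n|^\beta$ from Lemma \ref{lem-eigen}, it then suffices to establish a lower bound $|E_{\beta,\beta-1}(\lambda_n)|\ge C|\lambda_n|^{-2}$, since dividing by $\beta|\lambda_n|$ reproduces exactly the exponent $-3$ in \eqref{esti-XY}.

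For this lower bound I would use the classical sectorial asymptotics of the Mittag--Leffler function. By Lemma \ref{lem-eigen}(1) every eigenvalue lies in the sector $|\arg\lambda_n|>\beta\pi/2$, and as $|n|\to\infty$ the zeros cluster along the critical rays $\arg z=\pm\beta\pi/2$, where both the exponential part $\tfrac1\beta z^{(2-\beta)/\beta}e^{z^{1/\beta}}$ and the algebraic tail $-\sum_{k}z^{-k}/\Gamma(\beta-1-\beta k)$ of the expansion of $E_{\beta,\beta-1}$ are present. The key device is to use the zero condition a second time: applying the same expansion to $E_{\beta,\beta}(\lambda_n)=0$ forces the exponential factor $e^{\lambda_n^{1/\beta}}$ to balance the algebraic tail of $E_{\beta,\beta}$, whose leading term is $-\lambda_n^{-2}/\Gamma(-\beta)$ (the $k=1$ coefficient drops out because $\Gamma(0)=\infty$). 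Substituting this relation into the expansion of $E_{\beta,\beta-1}(\lambda_n)$ eliminates the unknown exponential and pins the leading order to $\lambda_n^{1/\beta-2}/\Gamma(-\beta)$. Since $\Gamma(-\beta)$ is finite and nonzero for $\beta\in(1,2)$ and $1/\beta>0$, this yields $|E_{\beta,\beta-1}(\lambda_n)|\ge c|\lambda_n|^{1/\beta-2}\ge c|\lambda_n|^{-2}$ for all sufficiently large $|n|$, the crude last step explaining why the stated exponent $-3$ is not sharp.

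The main obstacle is precisely this asymptotic step: because the $\lambda_n$ sit essentially on the transition rays $|\arg z|=\beta\pi/2$ rather than in the interior of a sector, neither the purely exponential nor the purely algebraic asymptotic is valid in isolation, so one must carry the full two-term expansion with explicit control of the remainder and justify that it may be evaluated (via the recurrence) at $\lambda_n$ despite the cancellation between the two contributions. Once the estimate is secured for large $|n|$, the finitely many remaining eigenvalues are disposed of by Lemma \ref{lem-XY}, which guarantees $\langle X_n,Y_n\rangle_{L^2_x(0,1)}=E_{\beta,\beta}'(\lambda_n)\neq 0$; taking the minimum of $|\langle X_n,Y_n\rangle_{L^2_x(0,1)}|\,|\lambda_n|^{3}$ over this finite set and merging it with the asymptotic bound produces a single positive constant $C=C(\beta)$ valid for every $n=\pm1,\pm2,\dots$, which completes the proof of \eqref{esti-XY}.
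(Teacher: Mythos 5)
Your proposal is correct, and it follows the same skeleton as the paper's proof --- recognizing the series in Lemma \ref{lem-XY'} as $E_{\beta,\beta}'(\lambda_n)$, invoking the recurrence $\beta z\,E_{\beta,\beta}'(z)=E_{\beta,\beta-1}(z)-(\beta-1)E_{\beta,\beta}(z)$ (the paper derives this same identity by differentiating $x^{\beta-1}E_{\beta,\beta}(x^{\beta})$ in two ways), and feeding in the asymptotics \eqref{eq-asymp} --- but you diverge at exactly the step where the paper is cavalier, and your version is the more careful one. The paper substitutes the purely algebraic expansion \eqref{eq-asymp} for both $E_{\beta,\beta-1}$ and $E_{\beta,\beta}$ directly at $z=\lambda_n$ and reads off $g(\lambda_n)\sim \frac{2}{\Gamma(-\beta)}\lambda_n^{-3}$; however, that expansion is uniformly valid only in closed subsectors $|\arg z|\ge\mu>\beta\pi/2$, whereas all but finitely many zeros of $E_{\beta,\beta}$ approach the critical rays $\arg z=\pm\beta\pi/2$, where the exponential term $\frac1\beta z^{(1-b)/\beta}e^{z^{1/\beta}}$ cannot be discarded --- indeed the zeros exist precisely because it balances the algebraic tail there. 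Your device of using the zero condition a second time, expressing $e^{\lambda_n^{1/\beta}}$ through $-\lambda_n^{-2}/\Gamma(-\beta)$ and substituting into the two-term expansion of $E_{\beta,\beta-1}$, resolves this: the exponential contribution to $E_{\beta,\beta-1}(\lambda_n)$ becomes $\Gamma(-\beta)^{-1}\lambda_n^{1/\beta-2}\bigl(1+o(1)\bigr)$, which dominates the algebraic term $-\lambda_n^{-2}/\Gamma(-1-\beta)$, so in fact $|\langle X_n,Y_n\rangle_{L_x^2(0,1)}|\sim c\,|\lambda_n|^{1/\beta-3}$. This confirms \eqref{esti-XY} with room to spare and shows the bound is not sharp --- contrary to what the paper's displayed asymptotic $g(\lambda_n)\sim 2\Gamma(-\beta)^{-1}|\lambda_n|^{-3}$ suggests (the paper's intermediate display also drops the factor $\beta-1$, a slip your route avoids entirely since $E_{\beta,\beta}(\lambda_n)=0$ kills that term exactly). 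Your explicit treatment of the finitely many small $|n|$ via Lemma \ref{lem-XY} and a minimum over a finite set is likewise spelled out where the paper is silent. What remains to execute in your outline is only standard bookkeeping: the uniform two-term expansion with controlled remainder holds for $|\arg z|\le\mu<\min(\pi,\beta\pi)$ (Podlubny, Theorem 1.3), which covers all large-modulus zeros since $|\arg\lambda_n|\to\beta\pi/2<\pi$, and the Popov--Sedletskii theory cited as \cite{2013Distribution} guarantees only finitely many zeros lie away from the critical rays; so there is no genuine gap, and your argument is, if anything, a corrected and sharpened version of the paper's.
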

\begin{proof}
We set 
\begin{equation}
\label{def-g}
g(x) := \sum_{k=1}^\infty \frac{kx^{k-1}}{\Gamma(\beta k+\beta)},
\end{equation}
then by a direct calculation, it is not difficult to obtain that
$$
\frac{d}{dx} E_{\beta,\beta}(x) = \sum_{k=0}^\infty \frac{d}{dx} \frac{x^k}{\Gamma(\beta k+\beta)}= g(x).
$$
Now, in view of the following formulas
$$
\frac{d}{dx} (x^{\beta-1} E_{\beta,\beta}(x^\beta)) 
= (\beta-1)x^{\beta-2} E_{\beta,\beta}(x^\beta) + \beta x^{2\beta-2} g(x^\beta)
$$
and
$$
\frac{d}{dx} (x^{\beta-1} E_{\beta,\beta}(x^\beta))
=x^{\beta-2} E_{\beta,\beta-1}(x^\beta),
$$
we see that
$$
g(x^\beta) = \frac{ E_{\beta,\beta-1}(x^\beta) - (\beta-1)E_{\beta,\beta}(x^\beta) }{\beta x^\beta},
$$
that is
$$
g(x) = \frac{ E_{\beta,\beta-1}(x) - (\beta-1)E_{\beta,\beta}(x) }{\beta x}.
$$
Now from the asymptotic expansion of the Mittag--Leffler functions
\begin{equation}\label{eq-asymp}
E_{\alpha,\beta}(z) = -\sum_{k=1}^N \frac{z^{-k}}{\Gamma(\beta-\alpha k)} + O(|z|^{-1-N}),\quad |z|\to\infty,\quad \frac{\pi\alpha}2 \le |\arg z|\le \pi,
\end{equation}
see e.g., Theorem 1.4 on p.33 in Podlubny \cite{Podlubny-1999}, and noting that $\Gamma(0)=\Gamma(-1)=\infty$, we obtain
\begin{equation*}
\begin{aligned}
E_{\beta,\beta-1}(x) &\sim -\frac{x^{-2}}{\Gamma(-1-\beta)},
\\
E_{\beta,\beta}(x) &\sim -\frac{x^{-2}}{\Gamma(-\beta)},
\end{aligned}
\quad |x|\to\infty,\quad \frac{\pi\beta}2 \le |\arg x|\le \pi,
\end{equation*}
which combined with \eqref{def-g} implies 
\begin{align*}
g(x) 
&\sim \frac1{\beta x} \left[ -\frac{x^{-2}}{\Gamma(-1-\beta)} -\frac{x^{-2}}{\Gamma(-\beta)} \right]
\\
&= \frac2{\beta+1} \frac{-1}{\Gamma(-1-\beta)} \frac1{x^3}
= \frac2{\Gamma(-\beta)} \frac1{x^3},\quad |x|\to\infty,\quad \frac{\pi\beta}2\le|\arg x|\le\pi.
\end{align*}
Here, in the last step, we used the identity of the Gamma function: $\Gamma(-1-\beta)(-1-\beta) = \Gamma(-\beta)$. 

Finally, we get the asymptotic estimate for $g(\lambda_n)$:
$$
g(\lambda_n) \sim \frac2{\Gamma(-\beta)} \frac1{|\lambda_n|^3},\quad n\to\infty, \quad\frac{\pi\beta}2 \le |\arg \lambda_n|\le \pi,
$$
which combined with the formula in Lemma \ref{lem-XY}, we derive that
$$
|\langle X_n,Y_n\rangle_{L_x^2(0,1)}| \ge \frac{C}{|\lambda_n|^3},\quad n=\pm1,\pm2,\cdots.
$$
We complete the proof of the lemma.
\end{proof}
On the basis of the above lemmas, we have the following useful corollary.
\begin{cor}\label{cor-simple}
 Letting $\beta\in(1,2)$,  and $\lambda\in\mathbb C$ satisfy $E_{\beta,\beta}(\lambda)=0$, then $E_{\beta,\beta-1}(\lambda)\neq0$.
\end{cor}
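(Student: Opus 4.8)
The plan is to read the conclusion straight off the algebraic identity for $g$ that was established inside the proof of Lemma~\ref{lem-XnYn}, combined with the nonvanishing of the diagonal inner products from Lemma~\ref{lem-XY}. First I would recall that the zeros of $E_{\beta,\beta}(\cdot)$ are, by definition, exactly the eigenvalues $\lambda_n$ of the spectral problem \eqref{eq-eigen}, so any $\lambda\in\mathbb{C}$ with $E_{\beta,\beta}(\lambda)=0$ coincides with some $\lambda_n$. I would also note at the outset that $\lambda\ne0$, since $E_{\beta,\beta}(0)=1/\Gamma(\beta)\ne0$; this is needed so that the identity below can be evaluated at $\lambda$.

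Next I would invoke the identity
$$
g(x) = \frac{E_{\beta,\beta-1}(x) - (\beta-1)E_{\beta,\beta}(x)}{\beta x},\qquad g(x):=\sum_{k=1}^\infty \frac{k x^{k-1}}{\Gamma(\beta k+\beta)},
$$
which was derived in the proof of Lemma~\ref{lem-XnYn}. Evaluating at $x=\lambda$ and using the hypothesis $E_{\beta,\beta}(\lambda)=0$ makes the second term in the numerator vanish, so the identity collapses to
$$
g(\lambda) = \frac{E_{\beta,\beta-1}(\lambda)}{\beta\lambda}.
$$

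Finally, by Lemma~\ref{lem-XY'} we have $g(\lambda)=\langle X_n,Y_n\rangle_{L_x^2(0,1)}$ for the index $n$ with $\lambda=\lambda_n$, and this quantity is nonzero by Lemma~\ref{lem-XY}. Since $\beta\lambda\ne0$, the displayed equality then forces $E_{\beta,\beta-1}(\lambda)\ne0$, which is the assertion. I expect no genuine obstacle here: the corollary is essentially a restatement of Lemma~\ref{lem-XY} read through the $g$-identity. The only points requiring care are checking $\lambda\ne0$ so that the division is legitimate, and confirming that the $g$-identity and the series representation of $\langle X_n,Y_n\rangle_{L_x^2(0,1)}$ invoked above are precisely those recorded in Lemmas~\ref{lem-XY'} and~\ref{lem-XnYn}.
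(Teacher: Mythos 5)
Your proposal is correct and is essentially the argument the paper intends: the corollary is stated ``on the basis of the above lemmas,'' and the intended derivation is exactly your chain $\langle X_n,Y_n\rangle_{L_x^2(0,1)}=g(\lambda_n)\neq0$ (Lemmas \ref{lem-XY'} and \ref{lem-XY}) combined with the identity $g(\lambda)=E_{\beta,\beta-1}(\lambda)/(\beta\lambda)$ at a zero of $E_{\beta,\beta}$, plus the observation $\lambda\neq0$. The only point worth making explicit is that the $g$-identity, derived in Lemma \ref{lem-XnYn} via real substitutions $x^\beta$, holds for all complex arguments --- either by a termwise series computation using $1/\Gamma(\beta k+\beta-1)=(\beta k+\beta-1)/\Gamma(\beta k+\beta)$ or by the identity theorem for entire functions --- so your evaluation at the complex zero $\lambda$ is legitimate.
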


\section{Forward problem}\label{sec-forward}

Noting that $\{X_n\}_{|n|=1}^\infty$ forms a complete basis of $L^2(0,1)$, we assume that the unknown source profile $f$ is in $C[0,1]$ and can be written in the form of $f:=\sum_{|n|=1}^\infty f_n X_n$. Moreover, we can see that 
$$
f_n=\frac{\langle f,Y_n \rangle_{L_x^2(0,1)}}{\langle X_n,Y_n \rangle_{L_x^2(0,1)}}$$ by using the fact that $\{X_n\}_{|n|=1}^\infty$ and $\{Y_n\}_{|n|=1}^\infty$ form a bi-orthogonal system of functions. From the eigenfunction expansion argument, we see that the solution $u$ to the problem \eqref{eq-gov} can be represented as follows
\begin{equation}\label{sol-u}
u(x,t) = \sum_{|n|=1}^\infty \int_0^t \lambda(t-\tau) \tau^{\alpha-1} E_{\alpha,\alpha}(\lambda_n\tau^\alpha) d\tau \frac{\langle f,Y_n \rangle_{L_x^2(0,1)}}{\langle X_n,Y_n \rangle_{L_x^2(0,1)}} X_n(x).
\end{equation}
In fact, we have the wellposedness for the problem \eqref{eq-gov}.
\begin{thm}\label{thm-fp}
The solution $u$ to the problem \eqref{eq-gov} admits the representation
$$
u(x,t) = \sum_{|n|=1}^\infty \int_0^t \lambda(t-\tau) \tau^{\alpha-1} E_{\alpha,\alpha}(\lambda_n\tau^\alpha)  d\tau \frac{\langle f,Y_n \rangle_{L_x^2(0,1)}}{\langle X_n,Y_n \rangle_{L_x^2(0,1)}} X_n(x),
$$
 and for $\varepsilon\in(0,\alpha-1]$ be any fixed, $u$ can be estimated by
\begin{equation}\label{esti-u}
 \|x^{2+\varepsilon} u(x,t)\|_{C([0,1]\times[0,T])} \le C\|f\|_{L^\infty(0,1)}.
\end{equation}
Here, the constant $C>0$ depends only on $\varepsilon$, $T$, $\alpha$, $\beta$ and $\|\lambda\|_{L^\infty(0,T)}$.
Moreover, if $f\in \mathcal U_{M,0}$, we have $u\in C([0,1]\times[0,T])$ and $u_x \in C((0,1]\times[0,T])$.
\end{thm}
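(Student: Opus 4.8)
The plan is to obtain the representation by separation of variables against the bi-orthogonal system, and then to control the resulting series by sharp pointwise bounds on the Mittag--Leffler functions, the weight $x^{2+\varepsilon}$ being tailored to the boundary singularity.

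First I would derive the series. Writing $f=\sum_{|n|=1}^\infty f_nX_n$ with $f_n=\langle f,Y_n\rangle_{L_x^2}/\langle X_n,Y_n\rangle_{L_x^2}$ (legitimate by completeness of $\{X_n\}$ and bi-orthogonality) and seeking $u=\sum_{|n|=1}^\infty u_n(t)X_n$, the identity $D_x^\beta X_n=\lambda_n X_n$ decouples \eqref{eq-gov} into the scalar fractional ODEs $\partial_t^\alpha u_n-\lambda_n u_n=\lambda(t)f_n$ with $u_n(0)=u_n'(0)=0$. Its Duhamel solution is $u_n(t)=f_n\int_0^t (t-s)^{\alpha-1}E_{\alpha,\alpha}(\lambda_n(t-s)^\alpha)\lambda(s)\,ds$, and the substitution $s\mapsto t-s$ gives exactly the stated formula. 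It then remains to justify this formal expansion by showing the series converges in the asserted topologies, which is the content of the two estimates.

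Second, the analytic heart is a set of uniform Mittag--Leffler bounds. I would record $|E_{\alpha,\alpha}(z)|\le C(1+|z|)^{-2}$ and $|E_{\beta,\beta}(z)|\le C(1+|z|)^{-2}$ on the sector where the arguments $\lambda_n s^\alpha$ and $\lambda_n x^\beta$ live; the improvement over the generic $(1+|z|)^{-1}$ is available precisely because for the diagonal functions $E_{\rho,\rho}$ the $k=1$ term $z^{-1}/\Gamma(0)$ in the expansion \eqref{eq-asymp} vanishes, so $E_{\rho,\rho}(z)\sim -z^{-2}/\Gamma(-\rho)$. Combined with Lemma \ref{lem-eigen} ($|\lambda_n|\sim|n|^\beta$) and the lower bound $|\langle X_n,Y_n\rangle_{L_x^2}|\ge C|\lambda_n|^{-3}$ of Lemma \ref{lem-XnYn}, these yield $|\langle f,Y_n\rangle_{L_x^2}|\le C\|f\|_{L^\infty}|\lambda_n|^{-1}$, hence $|f_n|\le C\|f\|_{L^\infty}|\lambda_n|^{2}$, together with $|c_n(t)|\le C\|\lambda\|_{L^\infty}|\lambda_n|^{-1}$ for the time factor $c_n(t):=\int_0^t\lambda(t-\tau)\tau^{\alpha-1}E_{\alpha,\alpha}(\lambda_n\tau^\alpha)\,d\tau$. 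Assembling, each term obeys $|c_n(t)f_nX_n(x)|\le C\|\lambda\|_{L^\infty}\|f\|_{L^\infty}|\lambda_n|\,x^{\beta-1}(1+|\lambda_n|x^\beta)^{-2}$; summing with $|\lambda_n|\sim|n|^\beta$ and splitting at $|n|\sim x^{-1}$ shows the sum behaves like $x^{-2}$ near the origin, so multiplying by $x^{2+\varepsilon}$, $\varepsilon\in(0,\alpha-1]$, absorbs this singularity and gives $\|x^{2+\varepsilon}u\|_{C([0,1]\times[0,T])}\le C\|f\|_{L^\infty}$ — the weight being calibrated exactly to the $x^{-2}$ blow-up forced by the non-orthogonality (the $|\lambda_n|^{-3}$ of Lemma \ref{lem-XnYn}). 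For $f\in\mathcal U_{M,0}$ the bound $|f_n|\le M$ removes the $|\lambda_n|^2$ growth, so $\sum_{|n|=1}^\infty|c_nf_nX_n|\le CM\|\lambda\|_{L^\infty}\sum_{|n|=1}^\infty|n|^{-\beta}<\infty$ uniformly, whence $u\in C([0,1]\times[0,T])$ by the Weierstrass test; differentiating termwise via $\partial_xX_n=x^{\beta-2}E_{\beta,\beta-1}(\lambda_n x^\beta)$ and estimating on each $[a,1]$, $a>0$, gives $u_x\in C((0,1]\times[0,T])$, the exclusion of $x=0$ reflecting the $x^{\beta-2}$ factor.

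The main obstacle I anticipate is securing the Mittag--Leffler decay uniformly in $n$. The bounds above presume the arguments $\lambda_n s^\alpha$ lie in the decay sector $|\arg z|>\alpha\pi/2$, whereas Lemma \ref{lem-eigen}(1) only furnishes $|\arg\lambda_n|>\beta\pi/2$, and these arguments may approach the Stokes ray as $|n|\to\infty$; the constant in $|E_{\alpha,\alpha}(z)|\le C(1+|z|)^{-2}$ must therefore be shown independent of $n$. This uniformity, rather than any single computation, is the delicate step, and it is where the relation between $\alpha$ and $\beta$ and the fine distribution of the zeros $\lambda_n$ enter decisively.
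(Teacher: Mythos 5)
Your proposal is correct and follows essentially the same route as the paper's proof: eigenfunction expansion with bi-orthogonal coefficients $f_n=\langle f,Y_n\rangle_{L_x^2(0,1)}/\langle X_n,Y_n\rangle_{L_x^2(0,1)}$, termwise Mittag--Leffler decay of type $|E_{\beta,\beta}(z)|\le C(1+|z|)^{-2}$ combined with the lower bound of Lemma \ref{lem-XnYn}, the Weierstrass test for $f\in\mathcal U_{M,0}$, and termwise differentiation of partial sums on $[\delta,1]\times[0,T]$ for $u_x$ --- your clean substitution-based $|\lambda_n|^{-1}$ bounds and the splitting at $|n|\sim x^{-1}$ are merely cosmetic variants of the paper's $\varepsilon/4$-exponent bookkeeping. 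The sector-uniformity caveat you raise at the end (whether $|\arg\lambda_n|>\beta\pi/2$ places $\lambda_n\tau^\alpha$ uniformly in the decay sector of $E_{\alpha,\alpha}$ when $\alpha>\beta$) is a genuine subtlety, but it is equally left implicit in the paper, which simply invokes \eqref{eq-asymp} along the zeros without comment, so it does not distinguish your argument from theirs.
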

\begin{proof}
In order to get the estimate \eqref{esti-u}, it suffices to evaluate each term in the equation \eqref{sol-u}. Firstly, from the properties of the Mittag--Leffler functions, $x^{2+\varepsilon} X_n(x)$ can be estimated as follows
\begin{align*}
|x^{2+\varepsilon} X_n(x)| 
\le \frac{Cx^{\beta+1+\varepsilon}}{(1+|\lambda_n|x^\beta)^2}
\le  \frac{C(|\lambda_n| x^\beta)^{1+\frac{1+\varepsilon}\beta}}{(1+|\lambda_n|x^\beta)^2} |\lambda_n|^{-1-\frac{1+\varepsilon}\beta}, \quad x\in(0,1).
\end{align*}
Moreover, in view of the fact that 
\begin{equation}\label{esti1}
\sup_{x\in[0,1]}\frac{x^\gamma}{(1+x)^2} \le C, \text{ where $\gamma\in[0,2]$, }
\end{equation}
we further see that
\begin{align*}
|x^{2+\varepsilon} X_n(x)| 
\le C|\lambda_n|^{-1-\frac{1+\varepsilon}\beta}, \quad x\in(0,1).
\end{align*}
Now we turn to estimating $\langle f,Y_n \rangle_{L_x^2(0,1)}$. Again by the use of the asymptotic properties of the Mittag--Leffler functions in \eqref{eq-asymp}, noting $f\in L^\infty(0,1)$,  a direct calculation yields
\begin{align*}
|\langle f,Y_n \rangle_{L_x^2(0,1)}|
\le& C\|f\|_{L^\infty(0,1)} \int_0^1 \frac{x^{\beta-1}}{(1+|\lambda_n|x^\beta)^2} dx\\
\le& C\|f\|_{L^\infty(0,1)} \int_0^1 \frac{(|\lambda_n|x^\beta)^{1-\frac{\varepsilon}{4\beta}}}{(1+|\lambda_n|x^\beta)^2} x^{\frac\varepsilon4-1} |\lambda_n|^{\frac\varepsilon{4\beta}-1}dx
\\
\le& C\|f\|_{L^\infty(0,1)} |\lambda_n|^{\frac\varepsilon{4\beta}-1}\int_0^1  x^{\frac\varepsilon4-1} dx .
\end{align*}
Here in the last inequality, we again used the estimate \eqref{esti1}. Finally, we get 
\begin{align*}
|\langle f,Y_n \rangle_{L_x^2(0,1)}| \le \frac{4C}{\varepsilon}\|f\|_{L^\infty(0,1)} |\lambda_n|^{\frac\varepsilon{4\beta}-1}.
\end{align*}
By a similar argument, we can arrive at the following inequalities  
$$
\begin{aligned}
&\left|\int_0^t \lambda(t-\tau) \tau^{\alpha-1} E_{\alpha,\alpha}(\lambda_n\tau^\alpha) d\tau \right| \\
\le& \|\lambda\|_{L^\infty(0,T)} \int_0^T |t^{\alpha-1} E_{\alpha,\alpha}(\lambda_n t^\alpha)| dt
\le \frac{4CT^{\frac{\varepsilon}{4}}}{\varepsilon} \|\lambda\|_{L^\infty(0,T)} |\lambda_n|^{\frac\varepsilon{4\alpha}-1}.
\end{aligned}
$$
Combining all the above estimates, we get
\begin{align*}
|x^{2+\varepsilon}u(x,t)| \le C\|\lambda\|_{L^\infty(0,T)}\|f\|_{L^\infty(0,1)} \sum_{|n|=1}^\infty  |\lambda_n|^{3+\frac\varepsilon{4\alpha}-1+\frac\varepsilon{4\beta}-1-1-\frac{1+\varepsilon}{\beta}},
\end{align*}
Moreover, noting the asymptotic estimate $|\lambda_n|\sim O(|n|^\beta)$, as $n\to\infty$ in Lemma  \ref{lem-eigen}, we see that 
$$
|\lambda_n|^{3+\frac\varepsilon{4\alpha}-1+\frac\varepsilon{4\beta}-1-1-\frac{1+\varepsilon}{\beta}}
= |\lambda_n|^{-\frac{1}{\beta} +\varepsilon(\frac1{4\alpha}+\frac1{4\beta}- \frac{1}{\beta})  }
\sim n^{-1+\varepsilon(\frac\beta{4\alpha}-\frac34)  },
$$
by noting that $\beta/\alpha <2$, we further see that
$$
|\lambda_n|^{3+\frac\varepsilon{4\alpha}-1+\frac\varepsilon{4\beta}-1-1-\frac{1+\varepsilon}{\beta}}
 \le Cn^{-1-\frac{\varepsilon}4},
$$
which implies that the infinite series 
$$
\sum_{|n|=1}^\infty  |\lambda_n|^{3+\frac\varepsilon{4\beta}-1+\frac\varepsilon{4\beta}-1-1-\frac{1+\varepsilon}{\beta}}<\infty,
$$
and therefore we see that
\begin{align*}
|x^{2+\varepsilon}u(x,t)| \le C\|\lambda\|_{L^\infty(0,T)}\|f\|_{L^\infty(0,1)},\quad (x,t)\in(0,1)\times(0,T).
\end{align*}
We finish the proof of the first part of the theorem. Next we assume the function $f\in\mathcal U_{M,0}$ and then we write $f$ by 
$$
f=\sum_{|n|=1}^\infty f_n X_n,\quad |f_n| \le M.
$$
Then by arguments similar to the first step, a direct calculation derives
\begin{align*}
|u(x,t)| \le& C\|\lambda\|_{L^\infty(0,T)} \sum_{|n|=1}^\infty \int_0^t \tau^{\alpha-1}|E_{\alpha,\alpha}(\lambda_n\tau^\alpha)| d\tau  \left| f_n\right|\, \left|X_n(x)\right|
\\
\le& CM \|\lambda\|_{L^\infty(0,T)} \sum_{|n|=1}^\infty \left(\frac{T^{\frac{\varepsilon}{4}}}{\varepsilon} |\lambda_n|^{\frac\varepsilon{4\alpha} -1} \right) \frac{Cx^{\beta-1}}{(1+|\lambda_n|x^\beta)^2}.
\end{align*}
Here in the second inequality we used the property of the Mittag--Leffler function and the argument for estimating $\int_0^t \tau^{\alpha-1} |E_{\alpha,\alpha}(\lambda_n\tau^\alpha)| d\tau$ in the first step. Furthermore, using the inequality \eqref{esti1}, we can obtain
\begin{equation*}
\begin{split}
|u(x,t)| \le& C \|\lambda\|_{L^\infty(0,T)} \sum_{|n|=1}^\infty  |\lambda_n|^{\frac\varepsilon{4\alpha} -1}  \frac{C(|\lambda_n| x^\beta)^{1-\frac1{\beta}}}{(1+|\lambda_n|x^\beta)^2} |\lambda_n|^{\frac1\beta-1}
\\
\le& C\sum_{|n|=1}^\infty  |\lambda_n|^{\frac\varepsilon{4\alpha} -1}  |\lambda_n|^{\frac1\beta-1},
\end{split}
\end{equation*}
which combined with the fact $|\lambda_n|\sim |n|^\beta$, as $n\to\infty$,  implies
\begin{align*}
|u(x,t)| \le C\sum_{|n|=1}^\infty |n|^{1+\frac{\varepsilon\beta}{4\alpha} -2\beta}.
\end{align*}
Now by noting $\beta>1$, we can choose $\varepsilon>0$ being sufficiently small so that $1+\frac{\varepsilon\beta}{4\alpha} -2\beta <-1$, therefore we finally get $|u(x,t)| \le C$.

For $f\in\mathcal U_{M,0}$, we next prove that $u_x(x,t)\in C((0,1]\times[0,T])$. For this, we set 
$$
u_N(x,t) := \sum_{|n|=1}^N \int_0^t \lambda(t-\tau) \tau^{\alpha-1} E_{\alpha,\alpha}(\lambda_n\tau^\alpha) d\tau f_n X_n(x).
$$
It is not difficult to check that $u_N(x,t)$ is differentiable with respect to $x$, and we have
$$
\frac{\partial}{\partial x}u_N(x,t) = \sum_{|n|=1}^N \int_0^t \lambda(t-\tau) \tau^{\alpha-1} E_{\alpha,\alpha}(\lambda_n\tau^\alpha) d\tau f_n x^{\beta-2} E_{\beta,\beta-1}(\lambda_nx^\beta).
$$
We can follow the above treatment to analoguely obtain
\begin{align*}
\left| \frac{\partial}{\partial x}u_N(x,t) \right| 
\le& \sum_{|n|=1}^N |f_n|\, \frac{4CT^{\frac{\varepsilon}{4}}}{\varepsilon} \|\lambda\|_{L^\infty(0,T)} |\lambda_n|^{\frac\varepsilon{4\alpha}-1}  \frac{Cx^{\beta-2}}{1+|\lambda_n|x^\beta}
\\
\le&  C x^{-2} \|\lambda\|_{L^\infty(0,T)}\sum_{|n|=1}^N  |\lambda_n|^{\frac\varepsilon{4\alpha}-1} |\lambda_n|^{-1}.
\end{align*}
By virtue of the asymptotic estimate of $\lambda_n$ in Lemma \ref{lem-eigen}, we finally arrive at
\begin{align*}
\left| \frac{\partial}{\partial x}u_N(x,t) \right| 
\le  C x^{-2} \|\lambda\|_{L^\infty(0,T)}\sum_{|n|=1}^N |n|^{\frac{\varepsilon\beta}{4\alpha}-2\beta}.
\end{align*}
Now by choosing $\varepsilon>0$ sufficiently small, we have $\frac{\varepsilon\beta}{4\alpha}-2\beta <-1$. Consequently, for any $\delta>0$, we can see that $\lim_{N\to\infty} \frac{\partial}{\partial x} u_N(x,t)$ uniformly for any $(x,t)\in [\delta,1]\times[0,T]$. We finish the proof of the theorem.
\end{proof}

\section{Proof of the main theorems}\label{sec-proof}
In the previous section, we show that the solution $u_x \in C((0,1]\times[0,T])$, which makes the observation $u_x(1,t)$ well defined for any $t\in(0,T)$. We will first show the proof of the first main result, that is, that $u_x(1,t)=0$ implies $f=0$.
\begin{proof}[Proof of Theorem \ref{thm-uniqueness}]
Assume $f\in \mathcal U_{M,\gamma}$ with $\gamma=0$, and from the representation formula of the solution in \eqref{sol-u}, we see that
$$
\frac{\partial}{\partial x}u(x,t) = \sum_{|n|=1}^\infty \int_0^t \lambda(t-\tau) \tau^{\alpha-1} E_{\alpha,\alpha}(\lambda_n\tau^\alpha) d\tau f_n x^{\beta-2} E_{\beta,\beta-1}(\lambda_nx^\beta),\quad x\in(0,1].
$$
From the observation assumption 
$u_x(1,t)=0$ for $t\in[0,T]$, letting $x=1$ in the above equation further implies
$$
0=\frac{\partial}{\partial x}u(1,t) = \int_0^t \lambda(t-\tau) \sum_{|n|=1}^\infty  \tau^{\alpha-1} E_{\alpha,\alpha}(\lambda_n\tau^\alpha)  f_n E_{\beta,\beta-1}(\lambda_n) d\tau,\quad t\in[0,T].
$$
By an argument similar to the proof of Theorem \ref{thm-fp}, it is not difficult to check that 
$$
\sum_{|n|=1}^N  E_{\alpha,\alpha}(\lambda_n z^\alpha)  f_n E_{\beta,\beta-1}(\lambda_n)
$$ is uniformly convergent as $N\to\infty$ and $f\in\mathcal U_{M,0}$, which implies from the Weierstrass theorem that 
$$
\sum_{|n|=1}^\infty  E_{\alpha,1}(\lambda_n z^\alpha)  f_n E_{\beta,\beta-1}(\lambda_n)
$$
is analytic with respect to $\Re z>0$. In view of the Titchmarsh theorem for convolution, we conclude that 
$$
\sum_{|n|=1}^\infty  E_{\alpha,1}(\lambda_n t^\alpha)  f_n E_{\beta,\beta-1}(\lambda_n) = 0,\quad t>0.
$$
Now taking Laplace transforms on both sides of the above equation, we see that
$$
\sum_{|n|=1}^\infty \frac{s^{\alpha-1}}{s^\alpha - \lambda_n}  f_n E_{\beta,\beta-1}(\lambda_n) = 0,\quad s>0.
$$
By letting $\eta=s^\alpha$, we see that 
$$
\sum_{|n|=1}^\infty \frac{1}{\eta - \lambda_n}  f_n E_{\beta,\beta-1}(\lambda_n) = 0,\quad \eta\in\mathbb C\setminus\{\lambda_n\}_{n=1}^\infty,
$$
which implies
$$
f_n E_{\beta,\beta-1}(\lambda_n) =0,\quad n=1,2,\cdots.
$$
Moreover, since $E_{\beta,\beta-1}(\lambda_n)\neq0$ from Corollary \ref{cor-simple}, we see that $f_n=0$ for any $n=1,2,\cdots$. We finish the proof of the theorem.
\end{proof}

In next step, we let $T\ge1$ and $\alpha=\beta$, we give a proof of the second main result. For this, we will firstly construct a bi-orthogonal system to the sequence $\{KX_n(t)\}_{|n|=1}^\infty$, where the operator $K$ is defined by 
\begin{equation}\label{def-K}
K\varphi(t) := \int_0^t \lambda(t-\tau) \varphi(\tau) d\tau,\quad \forall \varphi\in L^2(0,T).
\end{equation}
It is not difficult to check that $K\in \mathcal L(L^2(0,T), {_0 H^1(0,T)})$, that is, $K$ is a linear bounded operator from $L^2(0,T)$ to ${_0 H^1(0,T)}:=\{q\in H^1(0,T); q(0)=0\}$. Moreover, Yamamoto \cite{Yamamoto-1993} asserts that there exists an inverse operator $K^{-1}\in \mathcal L({_0 H^1(0,T)}, L^2(0,T))$. Then we define a system $Z_n$ in ${_0H^1(0,T)}$ by 
$$
Z_n(t):= (K^{-1})^* Y_n(t),\quad t\in(0,T).
$$
\begin{lem}
For any $m,n=\pm1,\pm2,\cdots$, we have
$$
\langle Z_n, KX_m \rangle_{L_t^2(0,T)} = \langle X_m, Y_n \rangle_{L_t^2(0,T)}.
$$
\end{lem}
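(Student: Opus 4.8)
The plan is to read the identity straight off the definition of the adjoint operator, combined with the invertibility of $K$ supplied by Yamamoto \cite{Yamamoto-1993}. The only genuine content is to insert the test element $KX_m$ into the defining relation of $(K^{-1})^*$ and to exploit $K^{-1}K=\mathrm{id}$; the rest is bookkeeping about which function spaces the various objects live in. Recall that here, because $\alpha=\beta$, the temporal mode $X_m(t)=t^{\beta-1}E_{\beta,\beta}(\lambda_m t^\beta)$ is exactly the spatial eigenfunction evaluated in $t$, so $KX_m$ is precisely the time-coefficient appearing in the solution representation \eqref{sol-u}.

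First I would verify that $KX_m$ is a legitimate argument for $K^{-1}$, i.e. that $KX_m\in {_0 H^1(0,T)}$. Since $\beta>1$, the factor $t^{\beta-1}$ is bounded near the origin, so $X_m\in L^2(0,T)$; by the mapping property $K\in\mathcal L(L^2(0,T),{_0 H^1(0,T)})$ stated above, it follows that $KX_m\in {_0 H^1(0,T)}$, which is exactly the domain on which $K^{-1}$ acts. Consequently $K^{-1}(KX_m)=X_m$. Then I would invoke the definition $Z_n=(K^{-1})^*Y_n$, which is characterized by the adjoint relation
$$
\langle K^{-1}\psi,\,Y_n\rangle_{L_t^2(0,T)} = \langle \psi,\,Z_n\rangle_{L_t^2(0,T)}, \qquad \forall\,\psi\in {_0 H^1(0,T)}.
$$
Choosing $\psi=KX_m$ and using $K^{-1}(KX_m)=X_m$ gives the equality
$$
\langle KX_m,\,Z_n\rangle_{L_t^2(0,T)} = \langle X_m,\,Y_n\rangle_{L_t^2(0,T)},
$$
which is the assertion of the lemma (up to the ordering of the pairing).

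I do not expect any serious obstacle: the argument is essentially a one-line adjoint computation once the regularity of $KX_m$ is settled. The only points requiring mild care are functional-analytic bookkeeping: one must treat $K^{-1}$ consistently as the (densely defined) inverse on $L^2(0,T)$ with domain ${_0 H^1(0,T)}$ so that its $L^2$-adjoint pairs against $Y_n$ as written, and one must ensure $K^{-1}$ is applied only to elements of its domain — which is precisely what the membership $KX_m\in {_0 H^1(0,T)}$ guarantees. Fixing the conjugation and ordering convention of $\langle\cdot,\cdot\rangle_{L_t^2(0,T)}$ (conjugate in the second slot, as in the $L_x^2$ definition earlier) then places the identity in exactly the stated form.
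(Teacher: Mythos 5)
Your proposal is correct and is essentially the paper's own proof: the paper likewise computes $\langle Z_n, KX_m \rangle_{L_t^2(0,T)} = \langle (K^{-1})^* Y_n, KX_m \rangle_{L_t^2(0,T)} = \langle Y_n, K^{-1}KX_m \rangle_{L_t^2(0,T)} = \langle Y_n, X_m \rangle_{L_t^2(0,T)}$ in one line, using the adjoint relation and $K^{-1}K=\mathrm{id}$ exactly as you do. Your added check that $KX_m\in{_0H^1(0,T)}$ (so that $K^{-1}$ is applied within its domain) is sound bookkeeping that the paper leaves implicit, not a different argument.
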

\begin{proof}
From the definition of $Z_n$ and $KX_n$, it follows that
$$
\langle Z_n, KX_n \rangle_{L_t^2(0,T)} = \langle (K^{-1})^* Y_n, KX_m \rangle_{L_t^2(0,T)} = \langle Y_n, K^{-1} K X_m \rangle_{L_t^2(0,T)} = \langle Y_n, X_m \rangle_{L_t^2(0,T)}.
$$
This completes the proof of the lemma.
\end{proof}
Since the observation time interval $[0,T]$ is greater than $[0,1]$, we can use less information over the time length, that is, without loss of generality, we can  assume $T=1$ and then the above lemma asserts that the system $\{Z_n\}_{|n|=1}^\infty$ is the desired bi-orthogonal system to the set $\{KX_n(t)\}_{|n|=1}^\infty$ under the inner product $L^2_t(0,1)$, that is,
$$
\langle Z_n, KX_m \rangle_{L_t^2(0,1)} = 0,\quad \forall m\ne n.
$$
Now we are ready to give the proof of the conditional stability of the inverse problem.
\begin{proof}[Proof of Theorem \ref{thm-stability}]
Assume $f\in \mathcal U_{M,\gamma}$ with $\gamma>2-\alpha$, and from the representation formula of the solution in \eqref{sol-u}, we see that
$$
\frac{\partial}{\partial x}u(x,t) = \sum_{|n|=1}^\infty \int_0^t \lambda(t-\tau) X_n(\tau) d\tau f_n x^{\alpha-2} E_{\alpha,\alpha-1}(\lambda_nx^\alpha).
$$
Letting $x=1$ in the above equation further implies
$$
\frac{\partial}{\partial x}u(1,t) = \sum_{|n|=1}^\infty \int_0^t \lambda(t-\tau) X_n(\tau) d\tau f_n E_{\alpha,\alpha-1}(\lambda_n).
$$
Now multiplying both sides by $\{Z_n\}$ and integrating on $t\in[0,1]$, we can arrive at the following equality
$$
\langle \frac{\partial}{\partial x}u(1,t), Z_n \rangle_{L_t^2(0,1)} = \langle f,Y_n \rangle_{L_x^2(0,1)} E_{\alpha,\alpha-1}(\lambda_n),
$$
which implies
$$
f_n= \frac{\langle f,Y_n \rangle_{L_x^2(0,1)}}{\langle X_n,Y_n \rangle_{L_x^2(0,1)}} = \frac{\langle \frac{\partial}{\partial x}u(1,t), Z_n \rangle_{L_t^2(0,1)}}{ \langle X_n,Y_n \rangle_{L_x^2(0,1)} E_{\alpha,\alpha-1}(\lambda_n)}.
$$
Based on this representation of $f$ via the additional data $u_x(1,t)$, we can estimate $f$ by 
\begin{align*}
|f(x)| =& \left|\sum_{|n|=1}^\infty f_n X_n(x) \right| \le \sum_{|n|=1}^{N-1} |f_n| |X_n(x)| + \sum_{|n|=N}^\infty |f_n| | X_n(x)|
\\
=&  \sum_{|n|=1}^{N-1}  \left| \frac{\langle \frac{\partial}{\partial x}u(1,t), Z_n \rangle_{L_t^2(0,1)}}{ \langle X_n,Y_n \rangle_{L_x^2(0,1)} E_{\alpha,\alpha-1}(\lambda_n)} \right| |X_n(x)| + \sum_{|n|=N}^\infty |f_n| | X_n(x)|
=I_N + R_N.
\end{align*} 
Since $f\in\mathcal U_{M,\gamma}$ with $\gamma>2-\alpha$, we see that
\begin{align*}
R_N = \sum_{|n|=N}^\infty |n^\gamma f_n| |n|^{-\gamma}| X_n(x)| 
\le M\sum_{|n|=N}^\infty |n|^{-\gamma}| X_n(x)|,
\end{align*}
from which we further use the property of the Mittag--Leffler function to derive
\begin{align*}
R_N \le& M\sum_{|n|=N}^\infty |n|^{-\gamma} \frac{Cx^{\alpha-1}}{(1+|\lambda_n|x^\alpha)^2}
\\
\le& M\sum_{|n|=N}^\infty |n|^{-\gamma} \frac{C(|\lambda_n| x^\alpha)^{1-\frac1{\alpha}}}{(1+|\lambda_n|x^\alpha)^2} |\lambda_n|^{\frac1\alpha-1}.
\end{align*}
Again by the use of the estimate \eqref{esti1} and the asymptotic estimate of the eigenvalue $\lambda_n$ we obtain
\begin{align*}
R_N \le C\sum_{|n|=N}^\infty |n|^{-\gamma} |\lambda_n|^{\frac1\alpha-1}
\le C\sum_{|n|=N}^\infty \left(\frac{1}{|n|} \right)^{\gamma+\alpha-1}.
\end{align*}
Finally, noting $\gamma>2-\alpha$ implies $\gamma+\alpha>2$, we conclude that 
\begin{align*}
R_N \le  C\sum_{|n|=N}^\infty \left(\frac{1}{|n|} \right)^{\frac{\gamma+\alpha}2} \left(\frac{1}{|n|} \right)^{\frac{\gamma+\alpha}2-1} 
\le CN^{1-\frac{\gamma+\alpha}2}. 
\end{align*}
Next we turn to the esimate for $I_N$. From Lemma \ref{lem-XnYn} and the asymptotic property of the Mittag--Leffler functions in \eqref{eq-asymp}, it follows that 
\begin{align*}
I_N \le& C\sum_{|n|=1}^{N-1}  |\lambda_n|^5 \left| \langle u_x(1,t), Z_n \rangle_{L_t^2(0,1)} \right| \frac{x^{\alpha-1}}{(1+|\lambda_n|x^\alpha)^2} 
\\
\le& C\sum_{|n|=1}^{N-1}  |\lambda_n|^5 \left| \langle u_x(1,t), Z_n \rangle_{L_t^2(0,1)} \right| \frac{C(|\lambda_n| x^\alpha)^{1-\frac1{\alpha}}}{(1+|\lambda_n|x^\alpha)^2} |\lambda_n|^{\frac1\alpha-1}
\\
\le& C\sum_{|n|=1}^{N-1}  |\lambda_n|^5 \left| \langle u_x(1,t), Z_n \rangle_{L_t^2(0,1)} \right| |\lambda_n|^{\frac1\alpha-1}.
\end{align*}
Now using the results in Theorem \ref{thm-fp}, we see that $|u_x(1,t)| \le C$ for any $t\in[0,T]$, which allows us to further estimate $I_N$ by
\begin{align*}
I_N \le C\|u_x(1,\cdot)\|_{C[0,T]}\sum_{|n|=1}^{N-1}  |\lambda_n|^{4+\frac1\alpha} \int_0^1 |Z_n(t)| dt.
\end{align*}
Moreover, from the definition of $Z_n(t)$, we see that
$$
\|Z_n\|_{L_t^2(0,1)} \le C\|Y_n\|_{L_t^2(0,1)} \le C\|Y_n\|_{C[0,1]} = C\|X_n\|_{C[0,T]} \le C|\lambda_n|^{\frac1\alpha-1},
$$
whcih implies
\begin{align*}
I_N \le C\|u_x(1,\cdot)\|_{C[0,1]}\sum_{|n|=1}^{N-1}  |\lambda_n|^{3+\frac2\alpha}.
\end{align*}
Again by noting the asymptotic estimate of $\lambda_n$, we further get the following estimate
\begin{align*}
I_N \le C\|u_x(1,\cdot)\|_{C[0,1]}\sum_{|n|=1}^{N-1} |n|^{3\alpha+2} \le C\|u_x(1,\cdot)\|_{C[0,1]}N^{3\alpha+3}.
\end{align*}
Letting $\|u_x(1,\cdot)\|_{C[0,1]} N^{3\alpha+3} = \left( \frac1N\right)^{\frac{\gamma+\alpha}2 - 1}$, that is, $N=\left( \frac1{\|u_x(1,\cdot)\|_{C[0,1]}}\right)^{\frac1{3\alpha+2+\frac{\alpha+\gamma}{2}}}$, finally we get
$$
\|f\|_{C[0,1]} \le C\|u_x(1,\cdot)\|_{C[0,1]}^{\frac{\frac{\alpha+\gamma}2 -1 }{3\alpha+2+\frac{\alpha+\gamma}2}},
$$
where $\alpha+\gamma>2$. By setting $\theta:=\frac{\frac{\alpha+\gamma}2 -1 }{3\alpha+2+\frac{\alpha+\gamma}2}$, we finish the proof of the theorem.
\end{proof}

\section{Numerical scheme and experiments}\label{sec-num}
In this section, we perform several numerical experiments for the reconstruction of the unknown source term $f$ in the domain $\Omega=(0,1)$ from the observed data $u_x(1,t)$ for $t\in(0,T)$.

For the direct problem, we discretize the operator $D_x^\beta$ on a graded mesh grid over $[0,1]$
with the Dirichlet boundary conditions using the piecewise linear finite element.
The graded mesh is defined as
\[ x_i=(ih)^\gamma, \quad i=0,\cdots,N, \]
where $h=\frac1N$ is the mesh size parameter and $\gamma>1$ is the grading parameter to cluster nodes near $x=0$, where the solution to the direct problem becomes more singular.
Meanwhile, the operator $\partial_t^\alpha$ is discretized on a temporal mesh grid with time step size $\tau>0$.
For the inverse problem, as the observed data $u_x(1,\cdot)$ linearly depend on the unknown source term $f$, we can formulate the problem as
\[ A f^h = z^{\delta,\tau}, \]
where $f^h$ is the discretization of $f$ on the spatial mesh grid and $z^{\delta,\tau}$ is the discretization of $u_x(1,\cdot)$ over the temporal mesh grid, added relative noise uniformly distributed in $[-\delta,\delta]$.
The matrix $A$ is ill-conditioned due to the ill-posedness of the inverse problem. We apply the Tikhonov regularization to solve the inverse problem:
\begin{align*}
    \hat{f}^h = (A^TA+\nu I)^{-1}A^T z^{\delta,\tau},
\end{align*}
with a regularization parameter $\nu>0$.
In the following numerical experiments, we solve the direct problem on a fine grid with $h=10^{-3}$ and $\tau=5\times10^{-4}$ and the inverse problem on a coarse grid with $h=4\times10^{-3}$ and $\tau=2\times10^{-3}$. Unless otherwise stated, we take the length of the observation time interval $T=1$ and the grading parameter $\gamma=4$ for spatial mesh grids.

\begin{example}\label{example:1}
In the first experiment, we take $\alpha=\beta=1.5$ and $\lambda(t)=2e^t$ for $t\in[0,1]$. We reconstruct the source term $f(x)=x(1-x)$, $f(x)=x^2(1-x)$, and $f(x)=x^4(1-x)$ from their respective observational data using our method, see Figure~\ref{fig:Ex1}. The noise levels are chosen to be $\delta=2\%$ and $\delta=5\%$.
\end{example}
We observe in Figure~\ref{fig:Ex1} that, as higher-order derivatives of $f$ vanish at $x=0$, the reconstruction becomes more accurate. In addition, increasing the noise level from $\delta=2\%$ to $\delta=5\%$ has minimal impact on the reconstruction.

\begin{figure}[htbp]
\centering\setlength{\tabcolsep}{0pt}
\begin{tabular}{ccc}
\includegraphics[width=.32\textwidth]{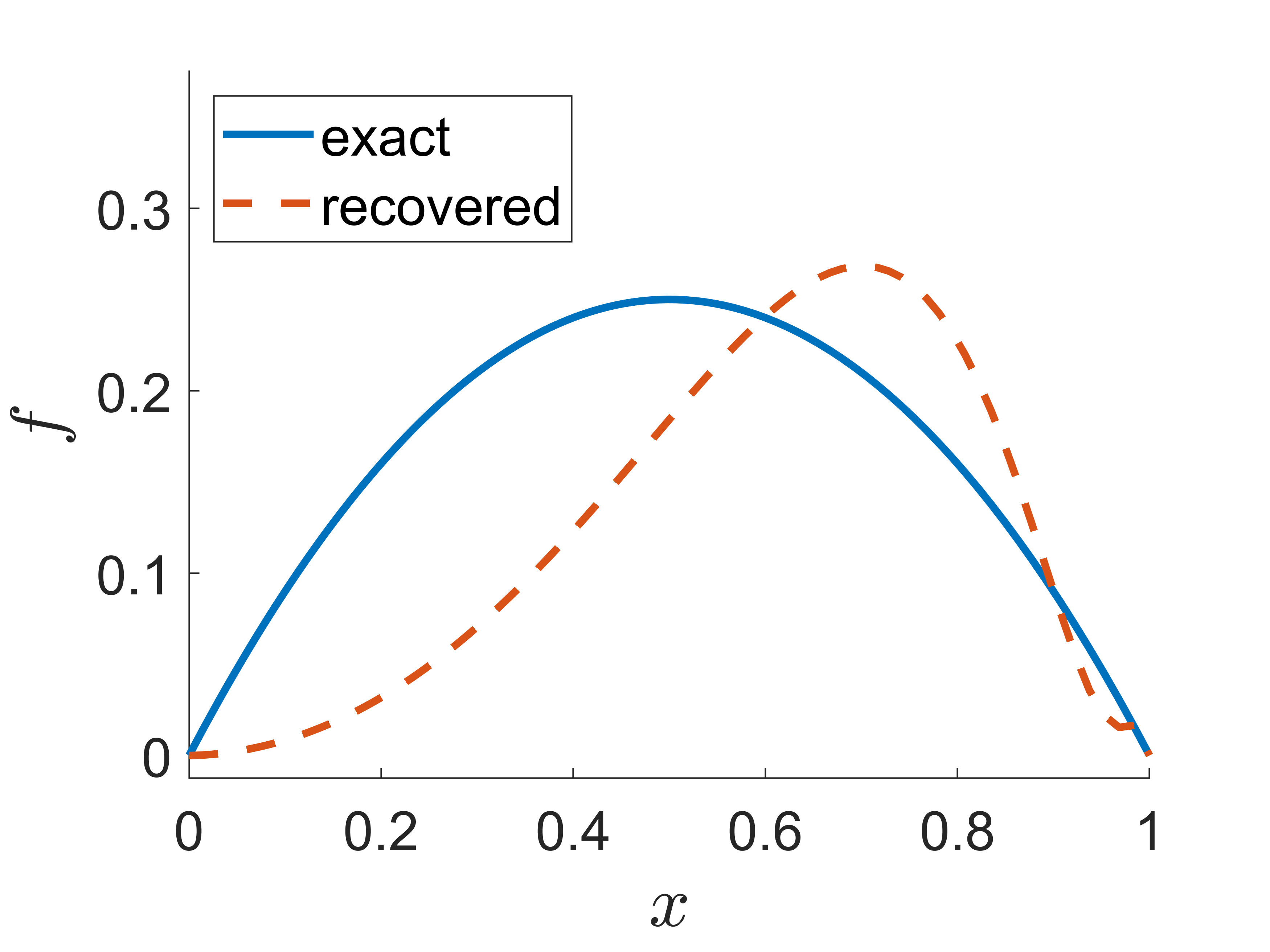} & \includegraphics[width=.32\textwidth]{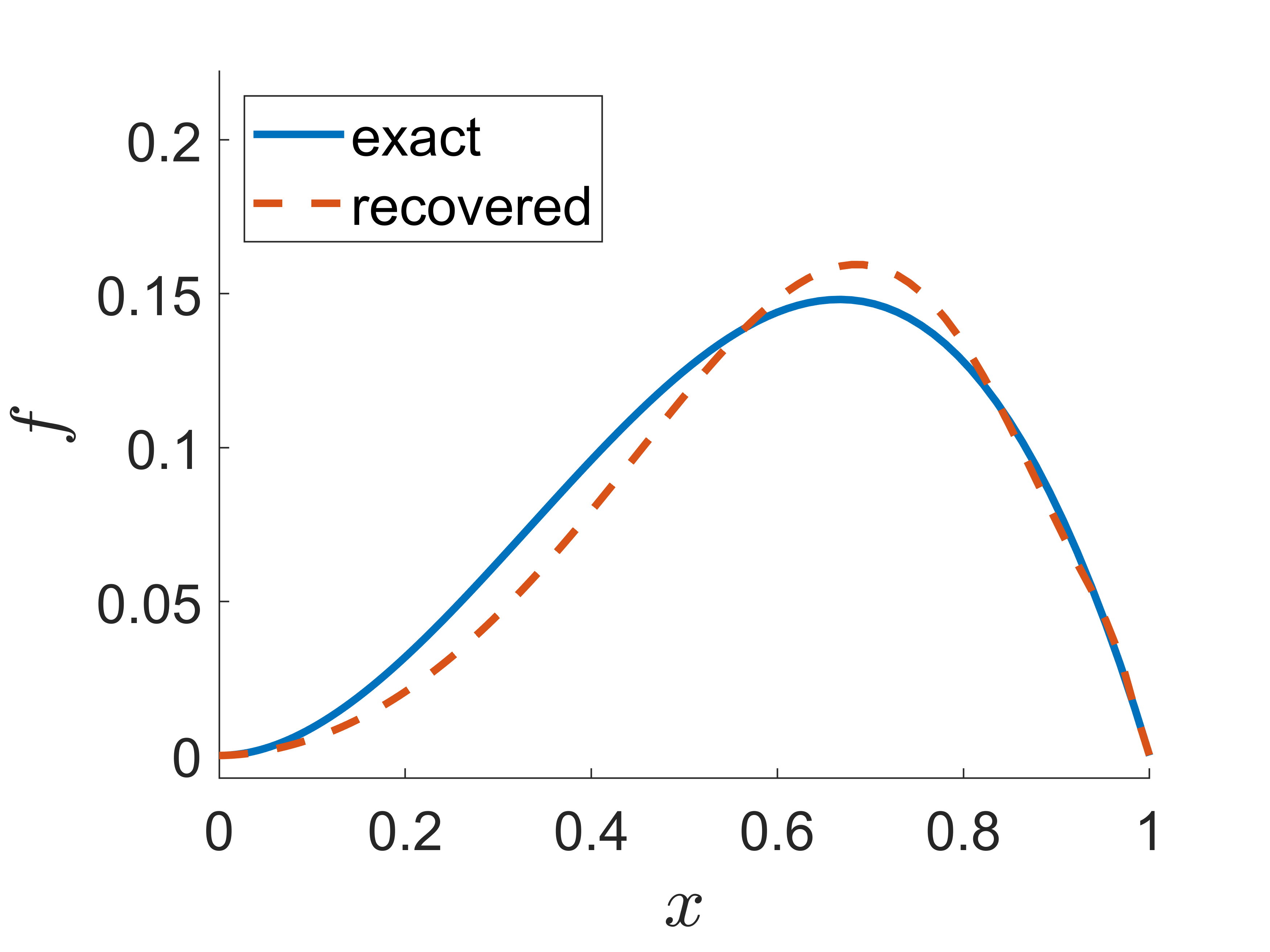} & \includegraphics[width=.32\textwidth]{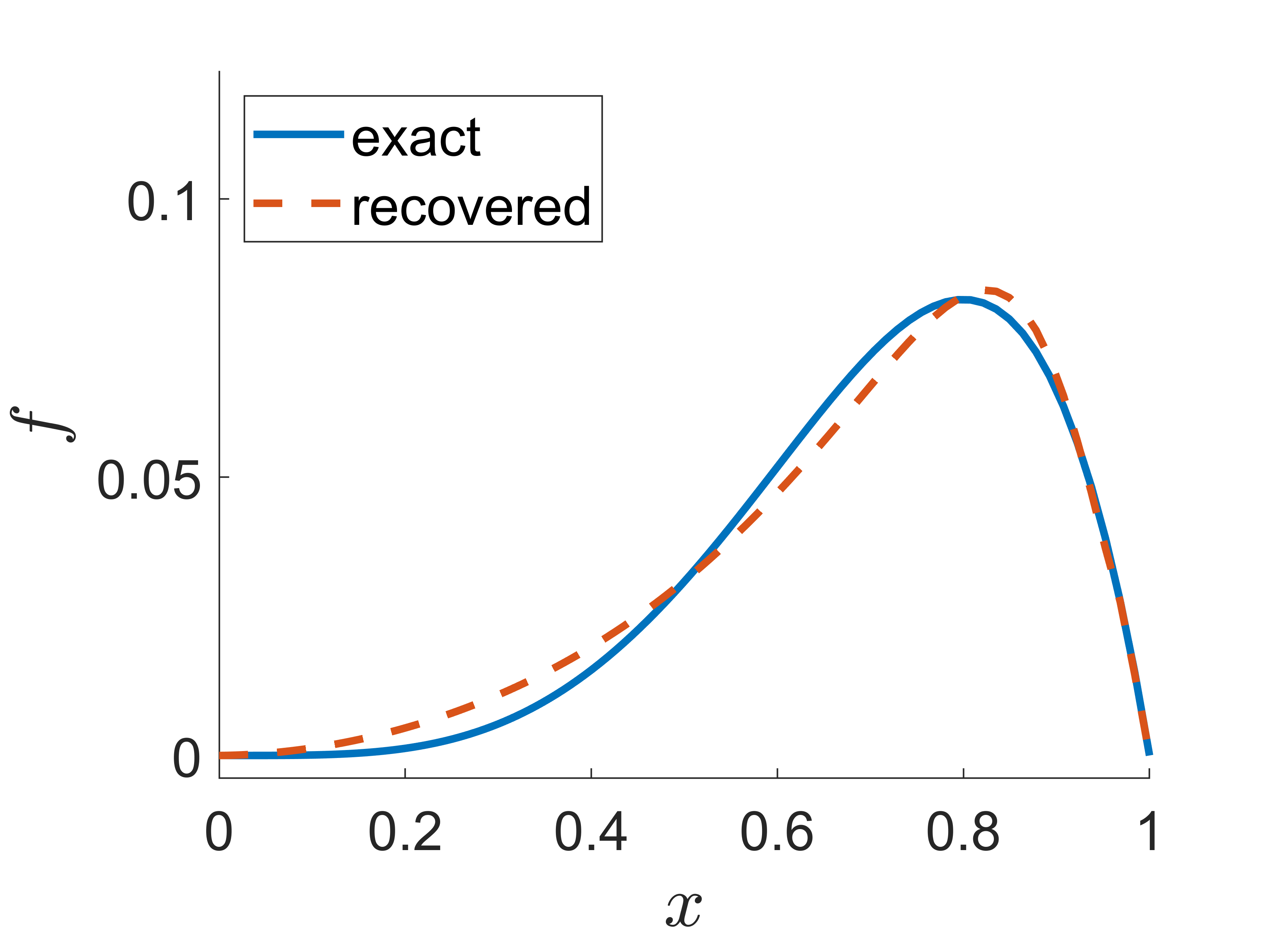} \\
\includegraphics[width=.32\textwidth]{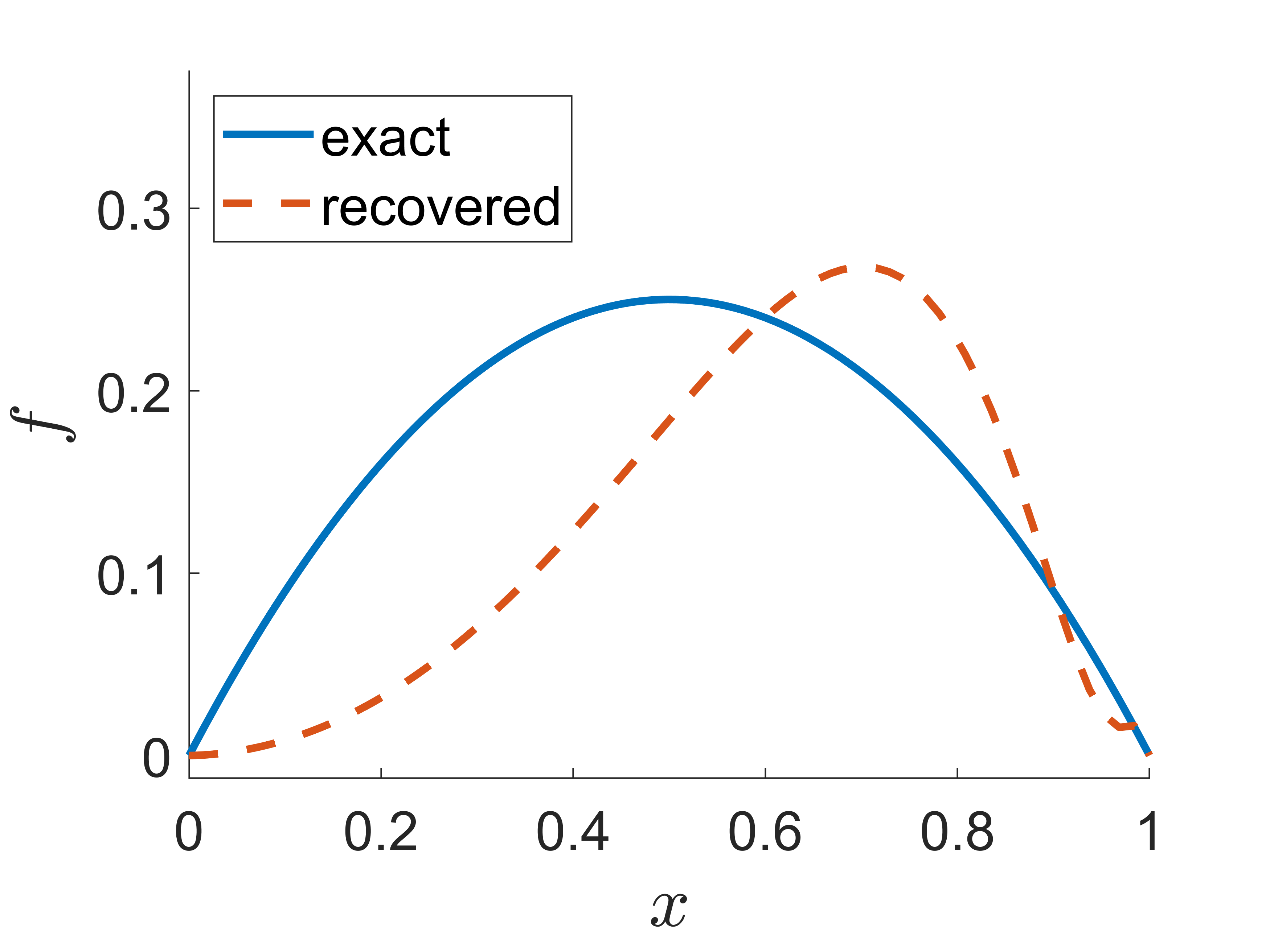} & \includegraphics[width=.32\textwidth]{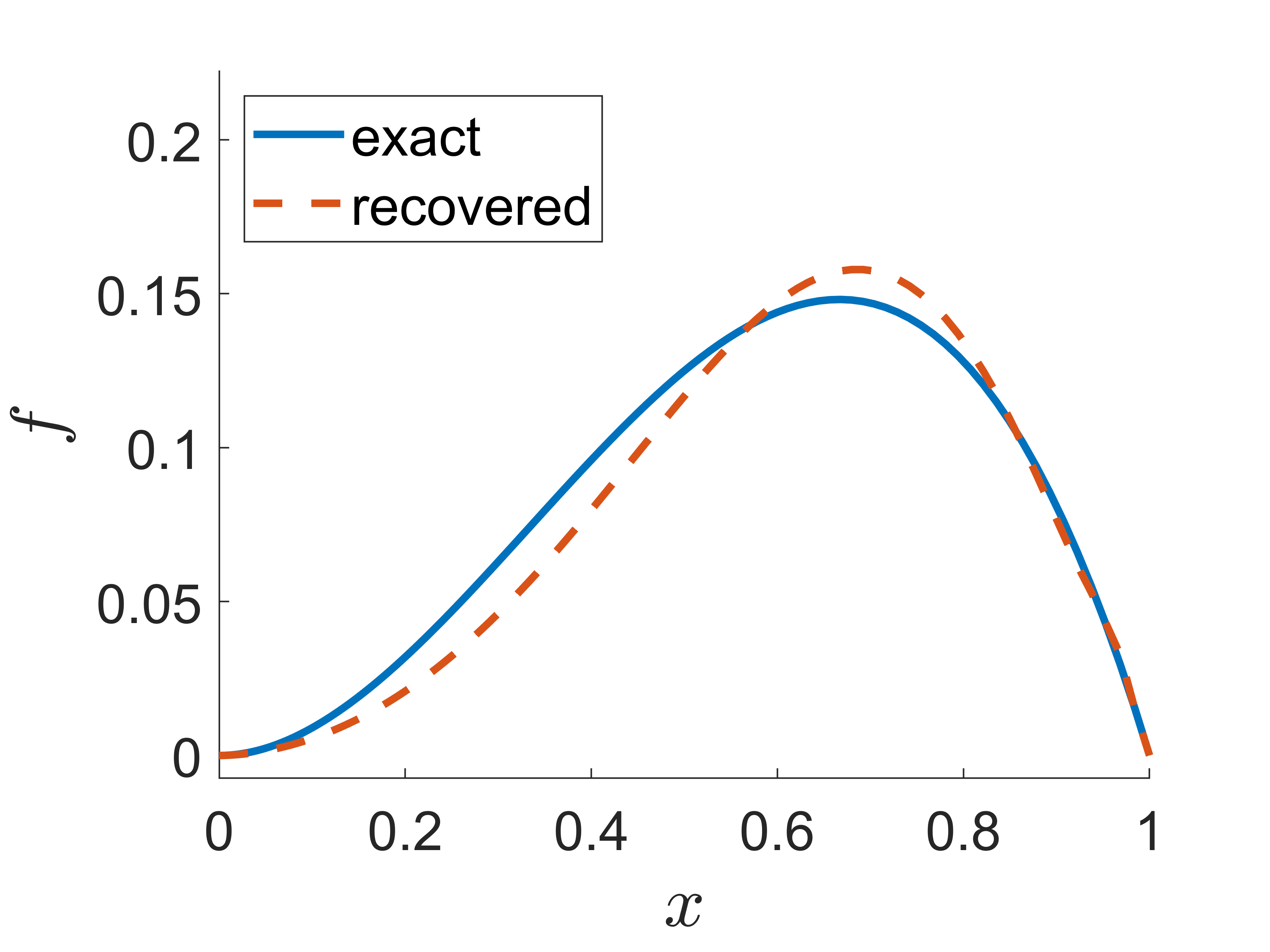} & \includegraphics[width=.32\textwidth]{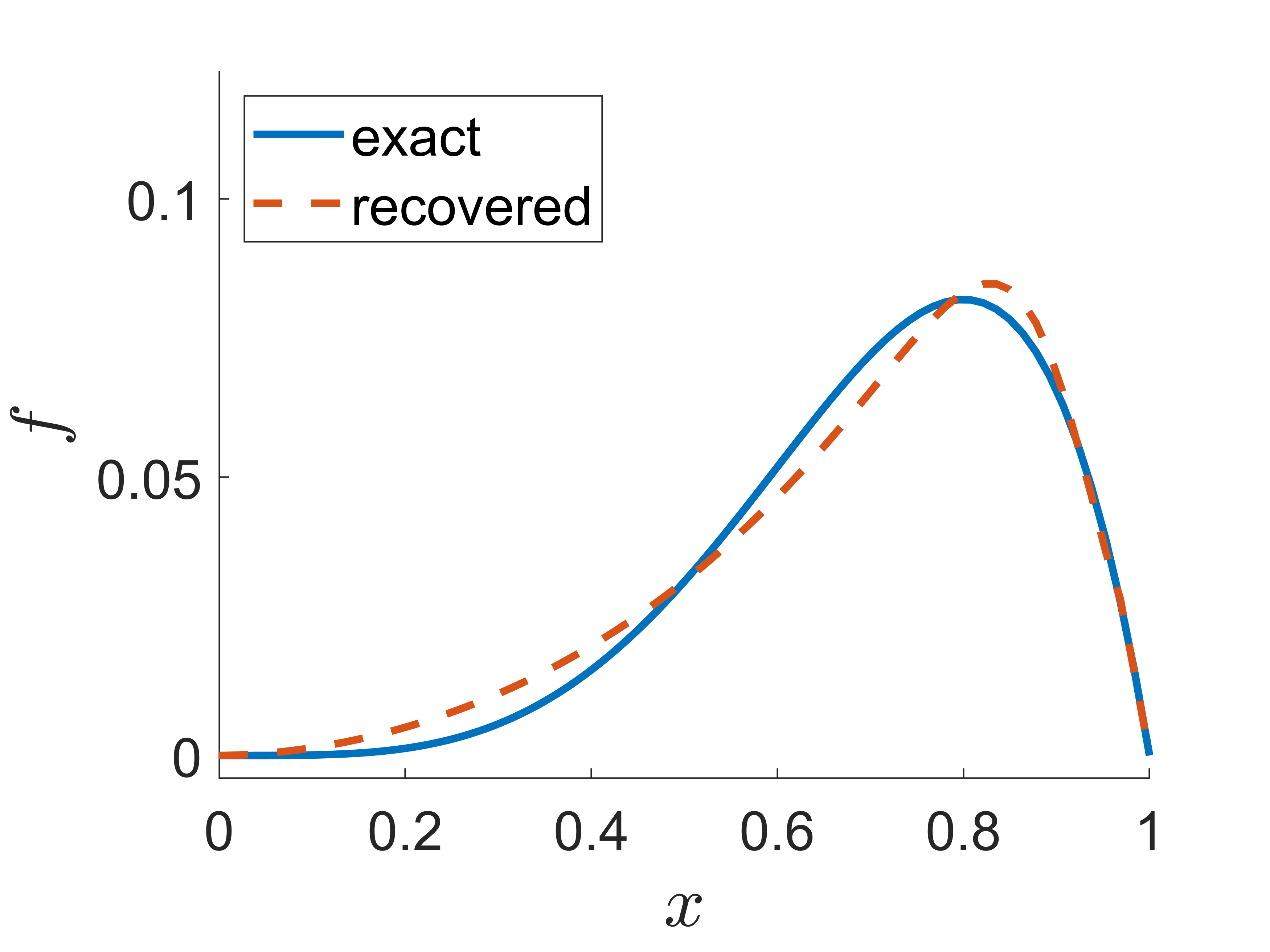} \\
(i) $f(x)=x(1-x)$ & (ii) $f(x)=x^2(1-x)$ & (iii) $f(x)=x^4(1-x)$  
\end{tabular}
\caption{Numerical results for Example~\ref{example:1}. Top row: $\delta=2\%$; Bottom row: $\delta=5\%$.}
\label{fig:Ex1}
\end{figure}

\begin{example}\label{example:2}
In the second experiment, we examine the effect of different source intensities and fractional orders. We reconstruct the source term $f(x)=x^2(1-x)$ using both $\lambda(t)=2e^t$ and $\lambda(t)=5\sin t$ for $t\in[0,1]$. For each source intensity, we take the fractional order to be $\alpha=1.2$, $\alpha=1.6$, and $\alpha=2$, all with $\beta=\alpha$. The inverse problem is solved with a noise level $\delta=2\%$ using our method.
\end{example}
We observe from Figure~\ref{fig:Ex2} that the quality of reconstruction increases as the fractional order $\alpha=\beta$ increases. When $\alpha$ is close to one, the inverse problem is severely ill-posedness thus the reconstruction is inaccurate and oscillatory. As $\alpha$ increases, the reconstruction achieves improved accuracy and smoothness, and the choice of source intensity has little impact on the reconstruction. When $\alpha=2$, the direct problem becomes a standard wave equation, the reconstruction is very accurate regardless of the choice of the source intensity. This observation is consistent with our theoretical investigation.

\begin{figure}[htbp]
\centering\setlength{\tabcolsep}{0pt}
\begin{tabular}{ccc}
\includegraphics[width=.32\textwidth]{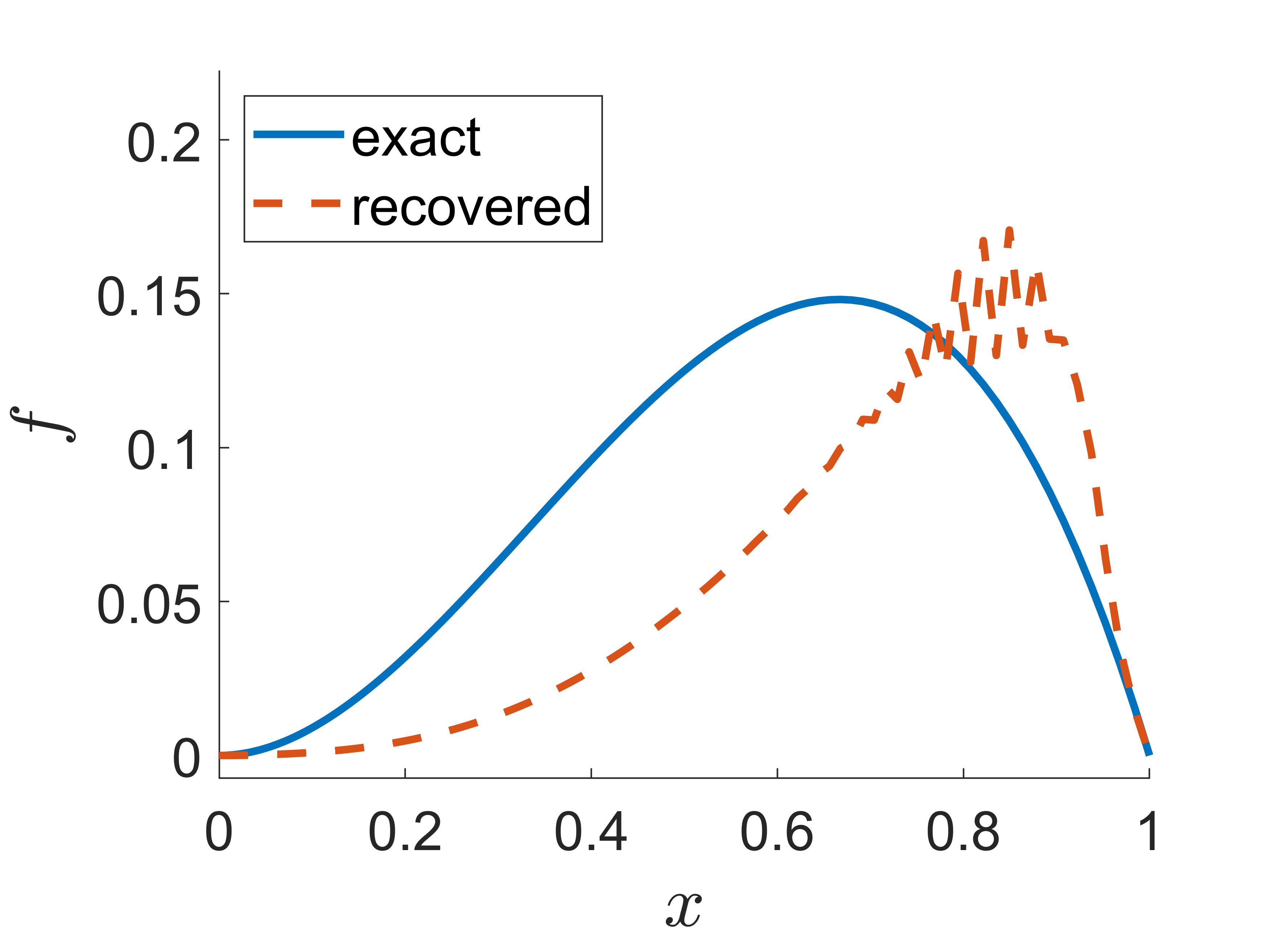} & \includegraphics[width=.32\textwidth]{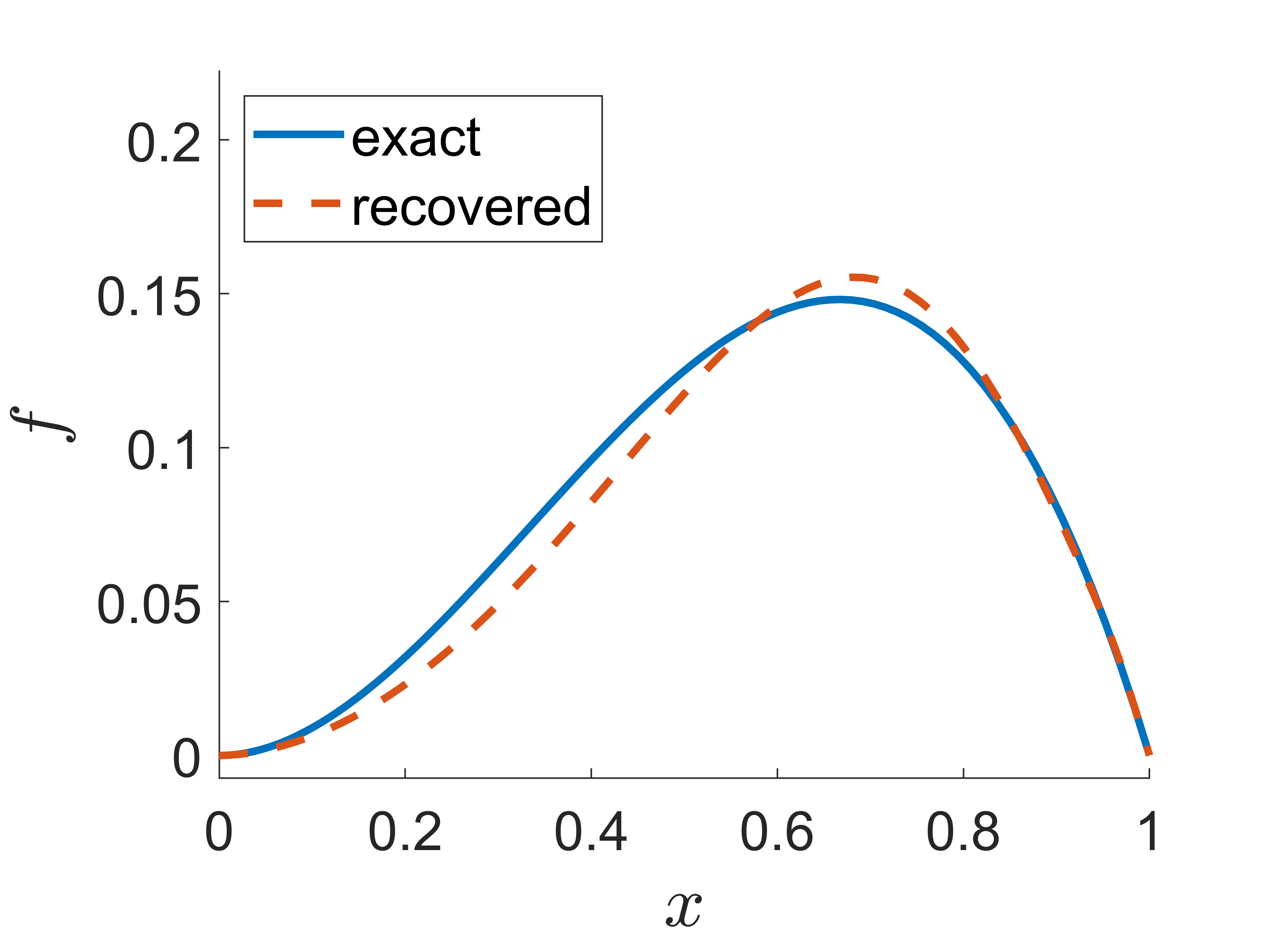} & \includegraphics[width=.32\textwidth]{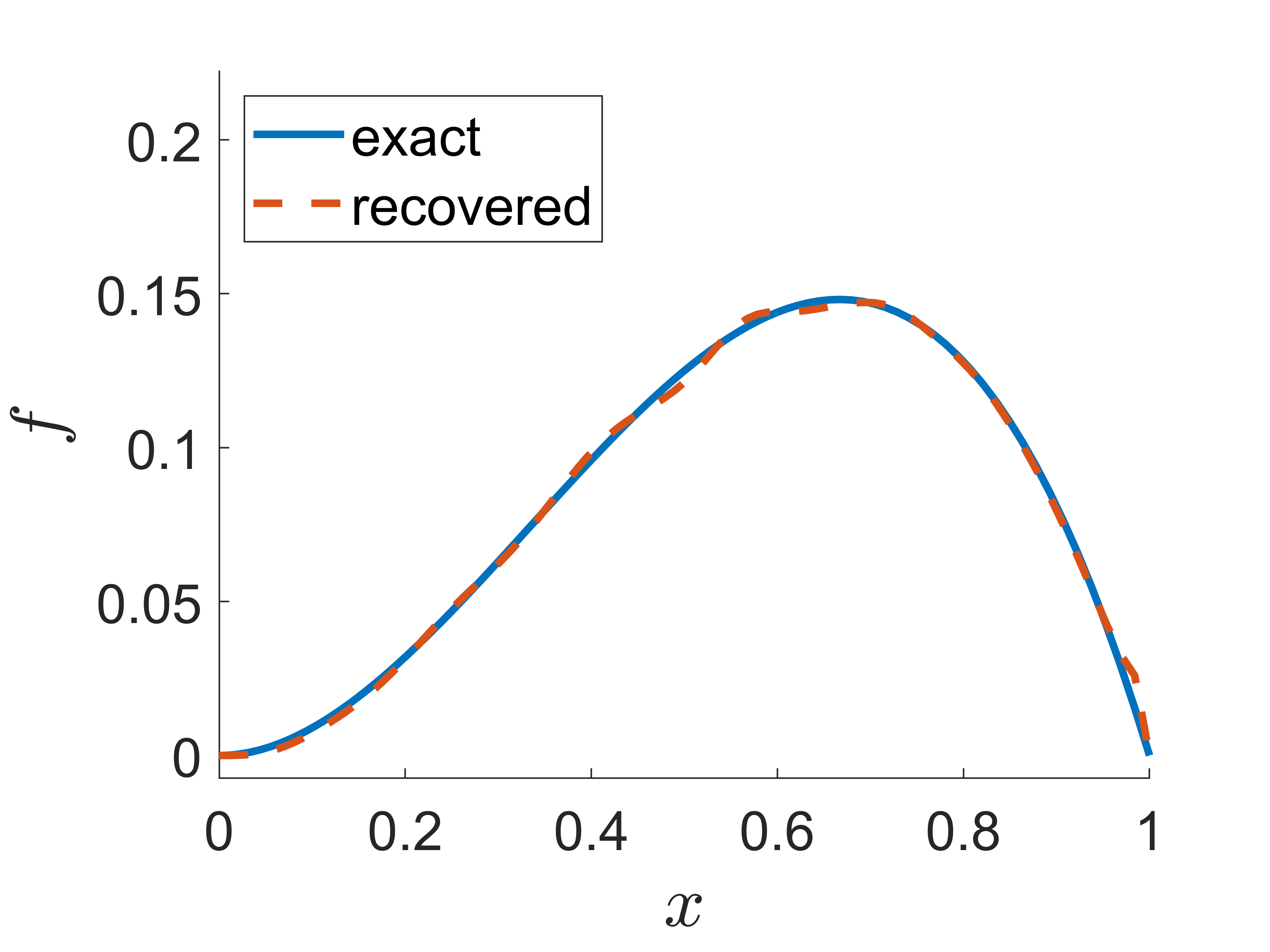} \\
\includegraphics[width=.32\textwidth]{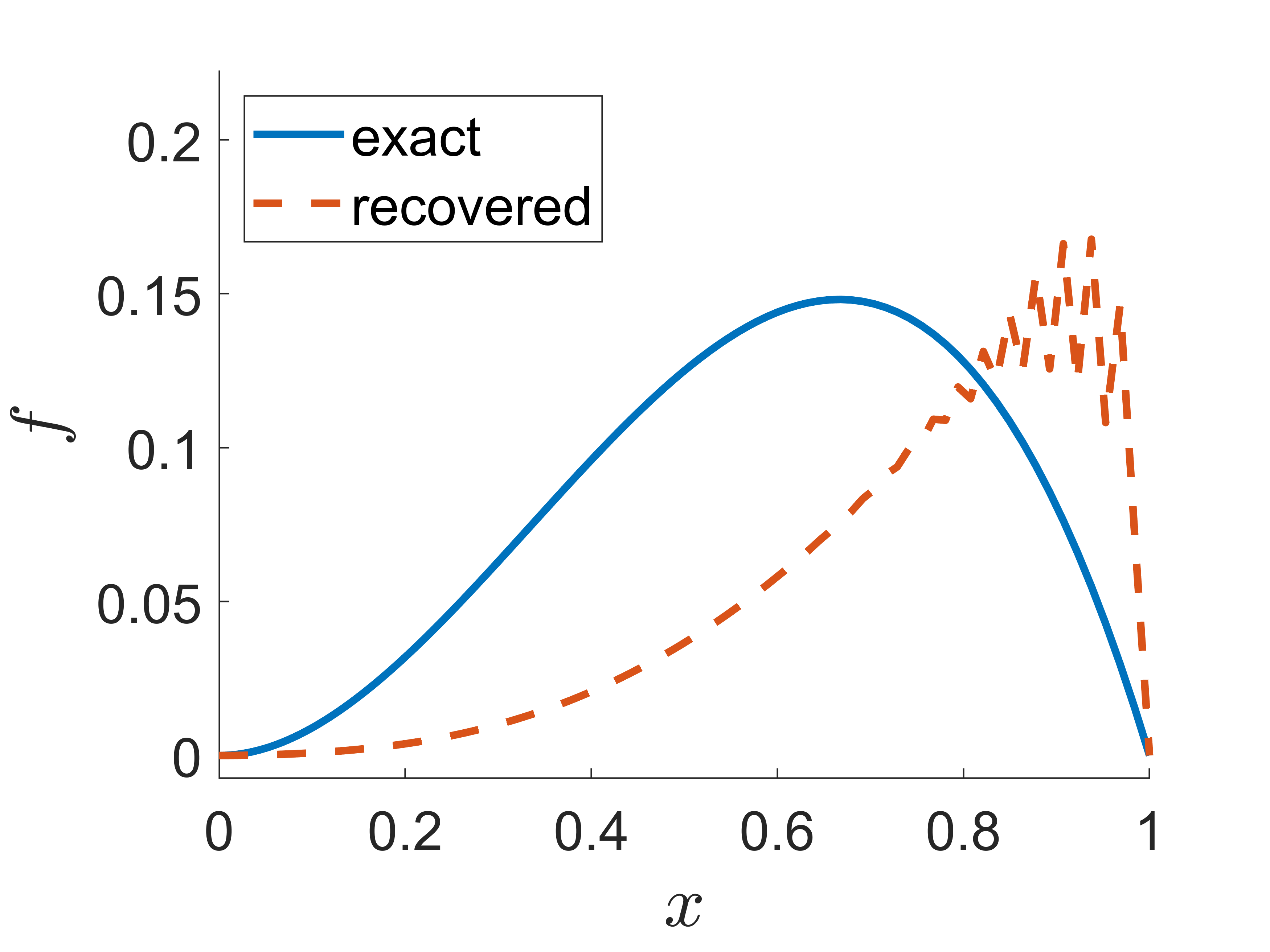} & \includegraphics[width=.32\textwidth]{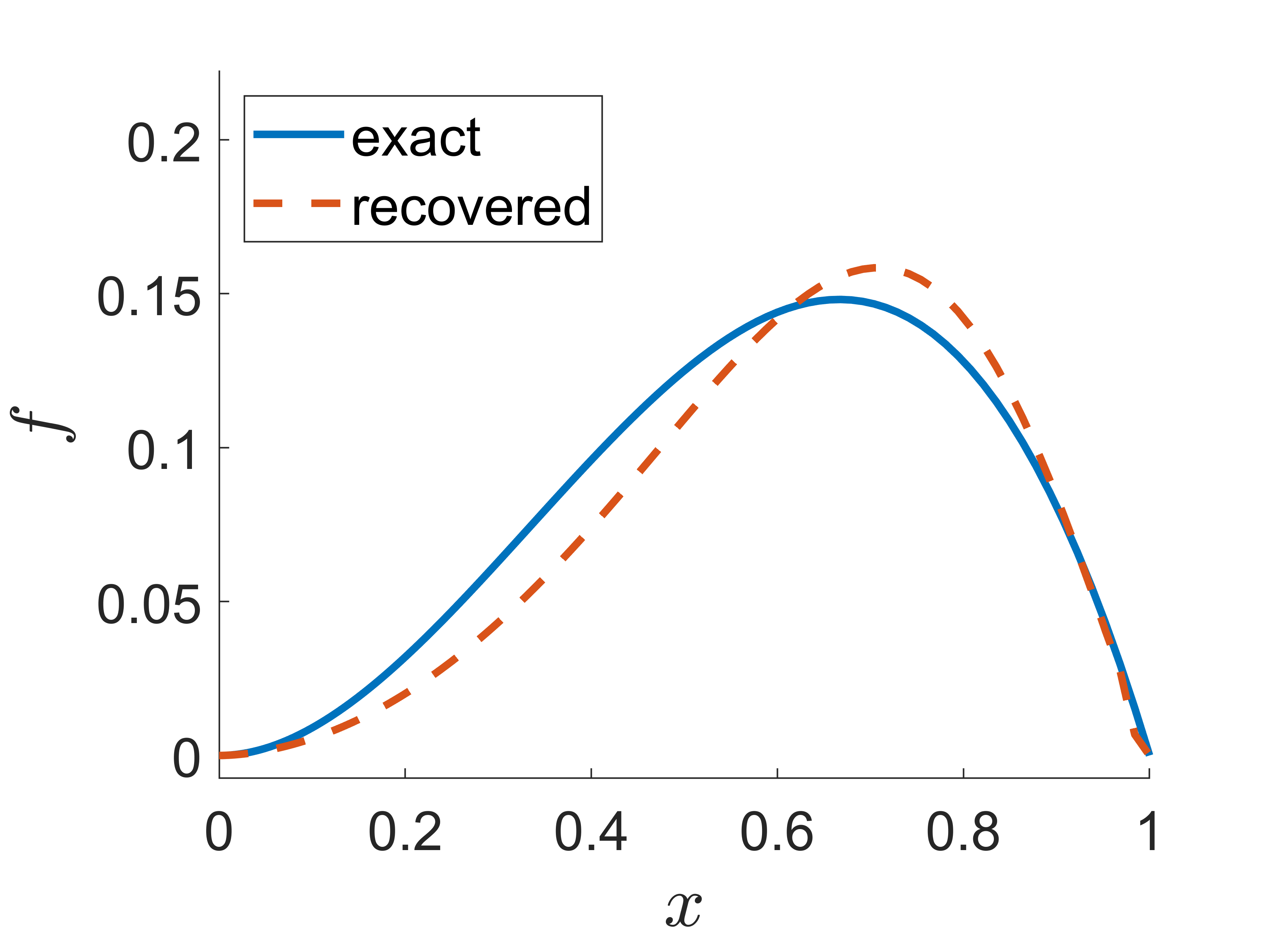} & \includegraphics[width=.32\textwidth]{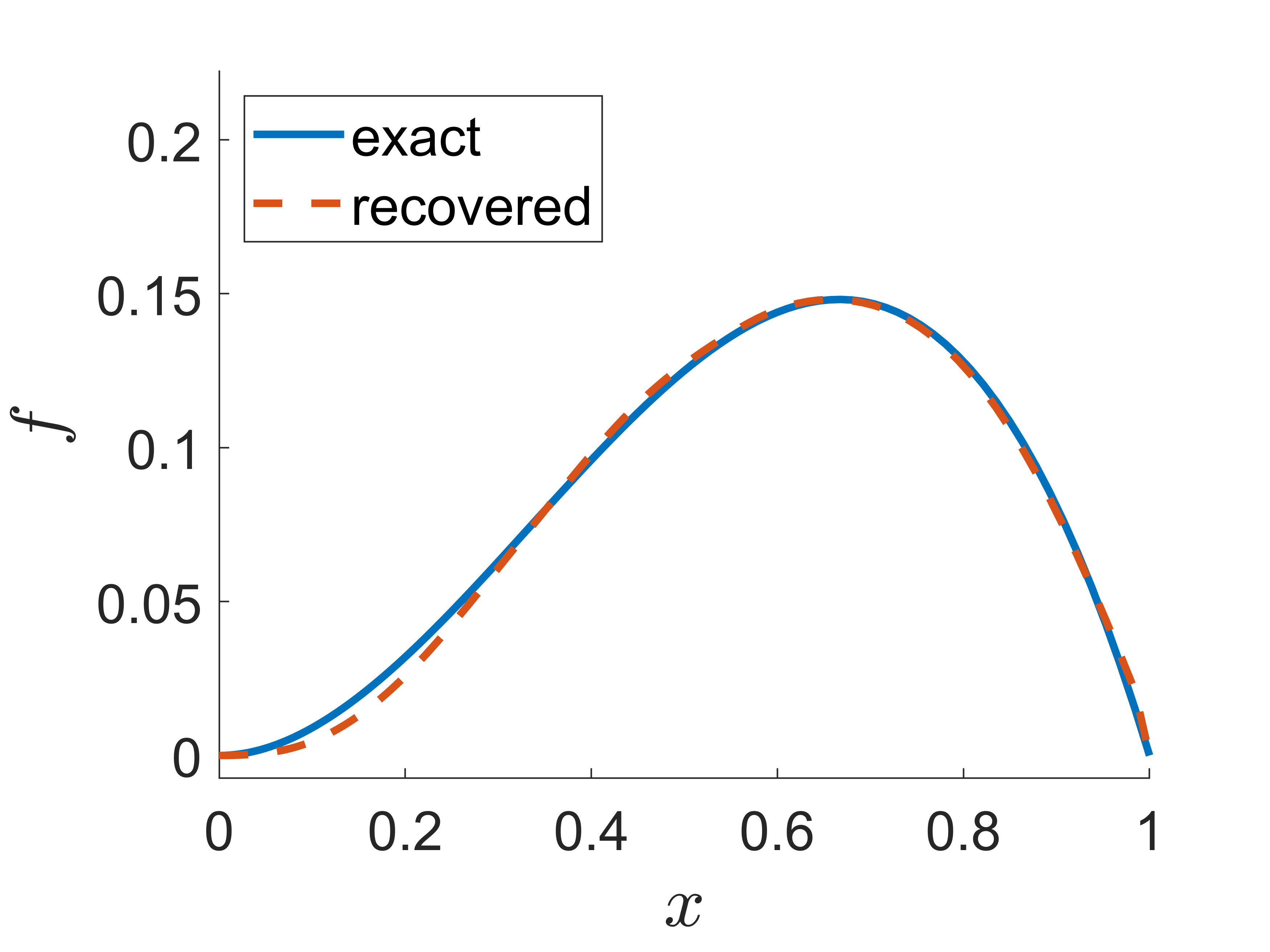} \\
$\alpha=1.2$ & $\alpha=1.6$ & $\alpha=2$  
\end{tabular}
\caption{Numerical results for Example~\ref{example:2}. Top row: $\lambda(t)=2e^t$; Bottom row: $\lambda(t)=5\sin t$.}
\label{fig:Ex2}
\end{figure}

\begin{example}\label{example:3}
In the third experiment, we consider the regime where $\alpha\neq\beta$. We examine two cases where $\alpha=1.2$ and $\alpha=1.4$, respectively, and in each case we take $\beta=1.3$, $\beta=1.5$, and $\beta=1.7$. We reconstruct the source term $f(x)=x^2(1-x)$ with the source intensity $\lambda(t)=2e^t$ for $t\in[0,1]$. The inverse problem is solved with a noise level $\delta=2\%$ using our method.
\end{example}

\begin{figure}[htbp]
\centering\setlength{\tabcolsep}{0pt}
\begin{tabular}{ccc}
\includegraphics[width=.32\textwidth]{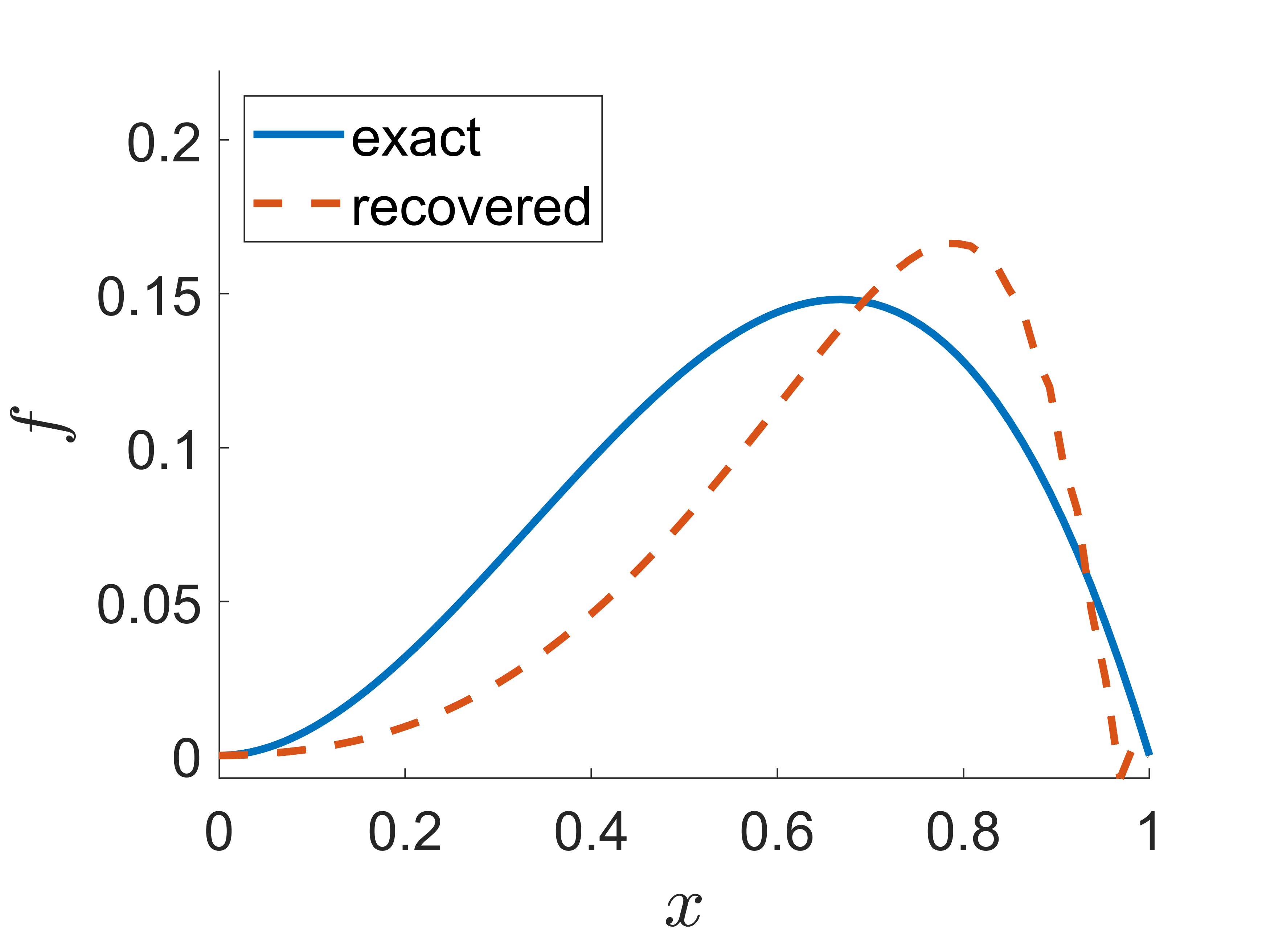} & \includegraphics[width=.32\textwidth]{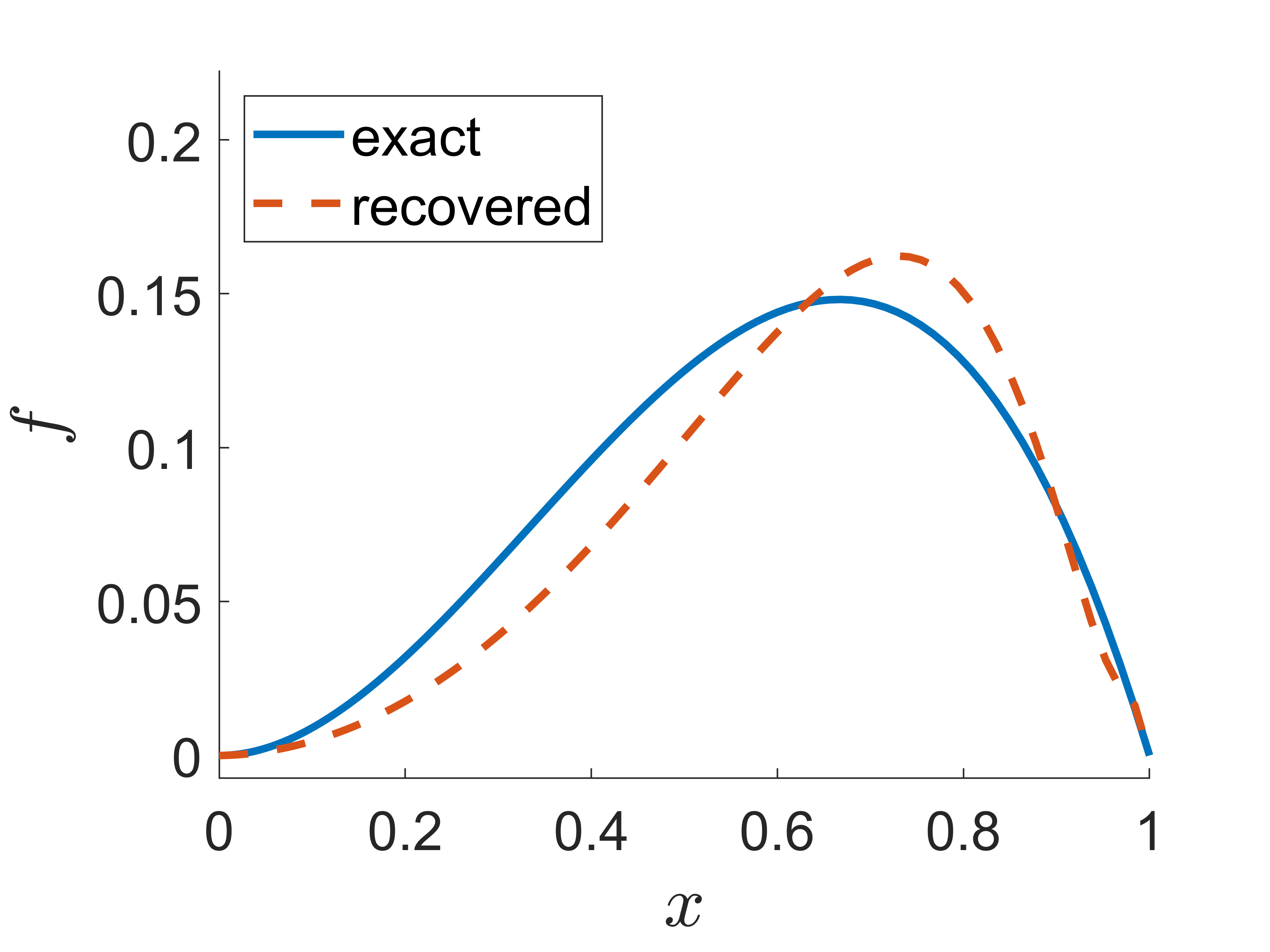} & \includegraphics[width=.32\textwidth]{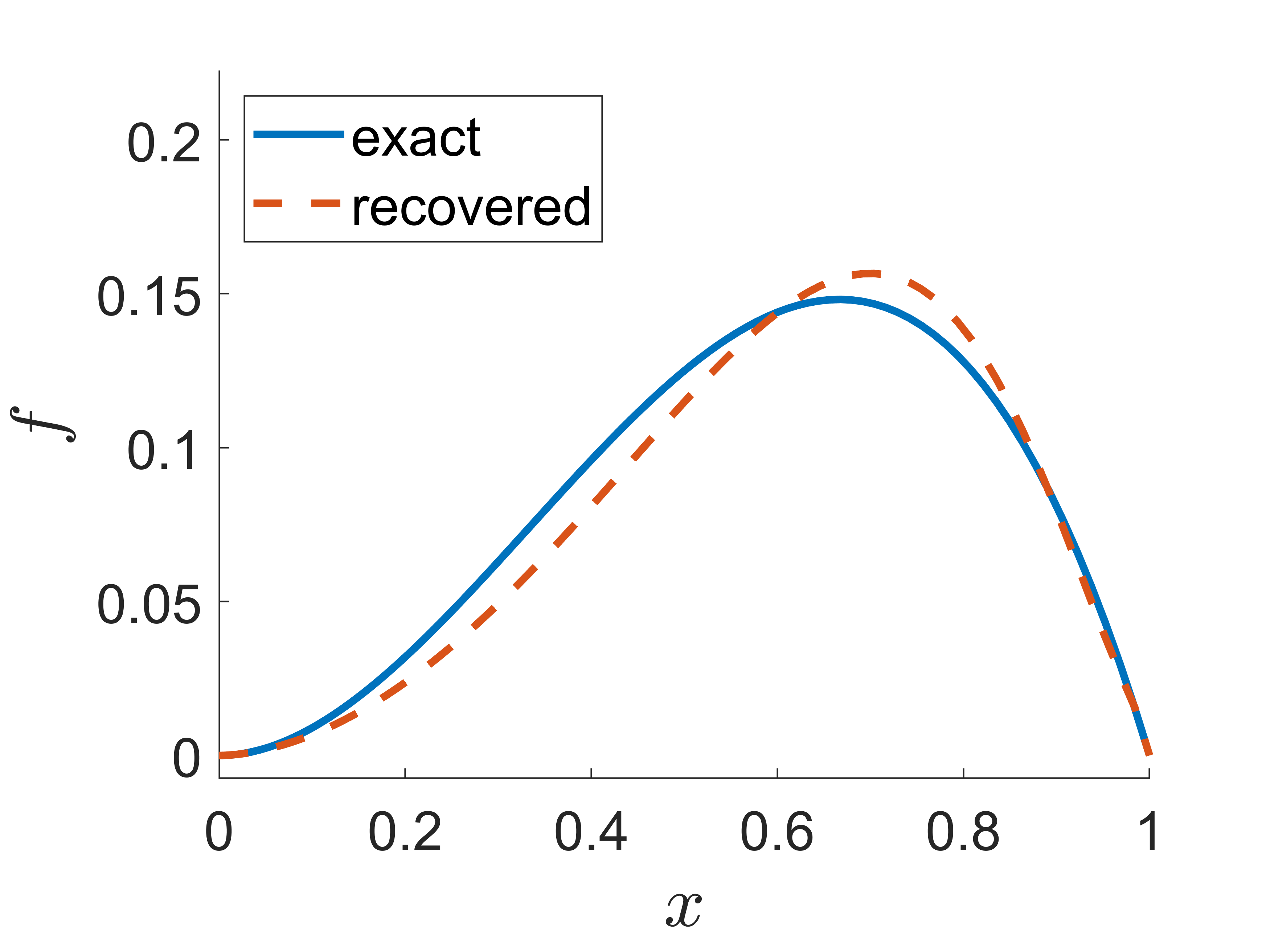} \\
\includegraphics[width=.32\textwidth]{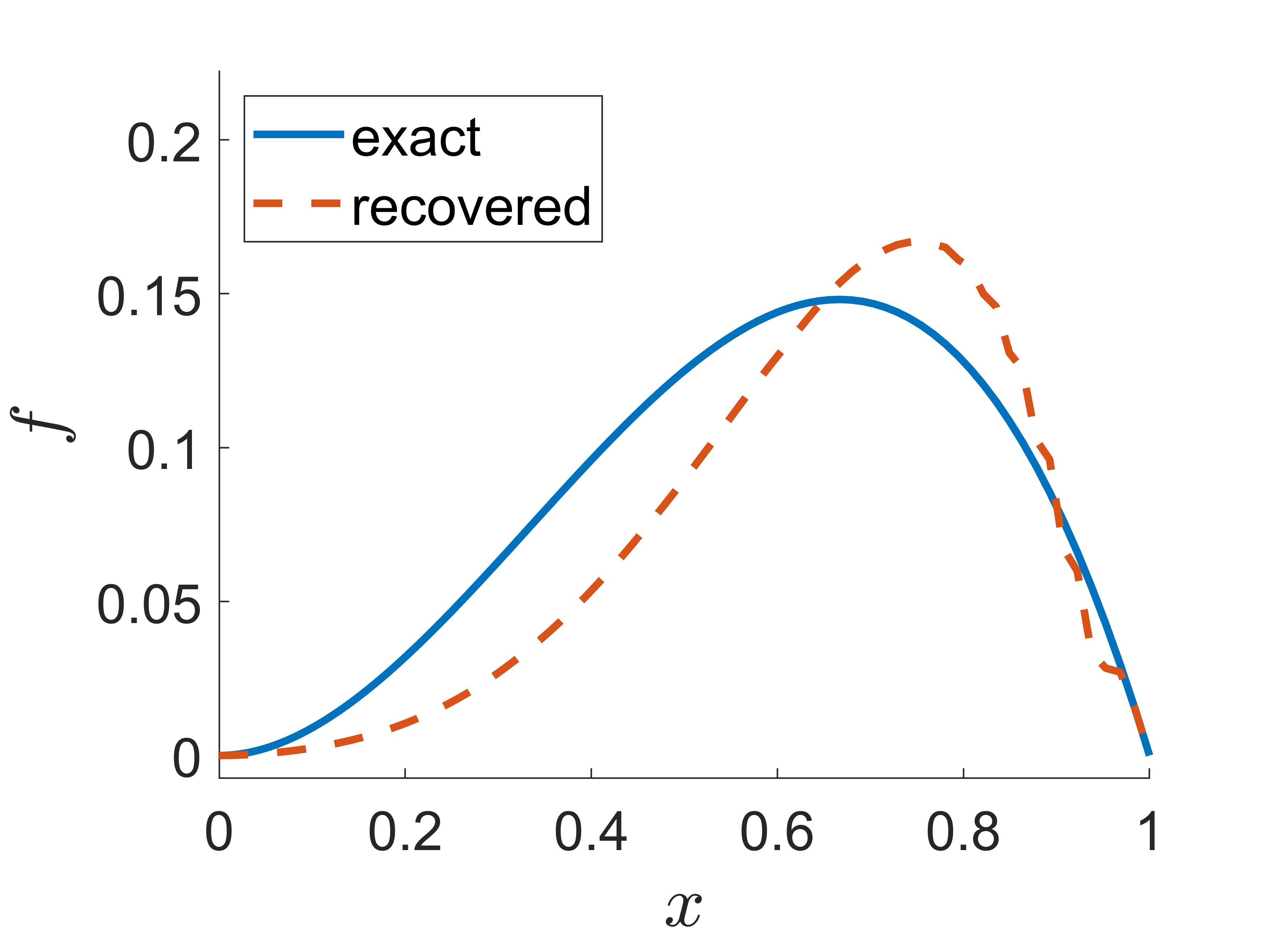} & \includegraphics[width=.32\textwidth]{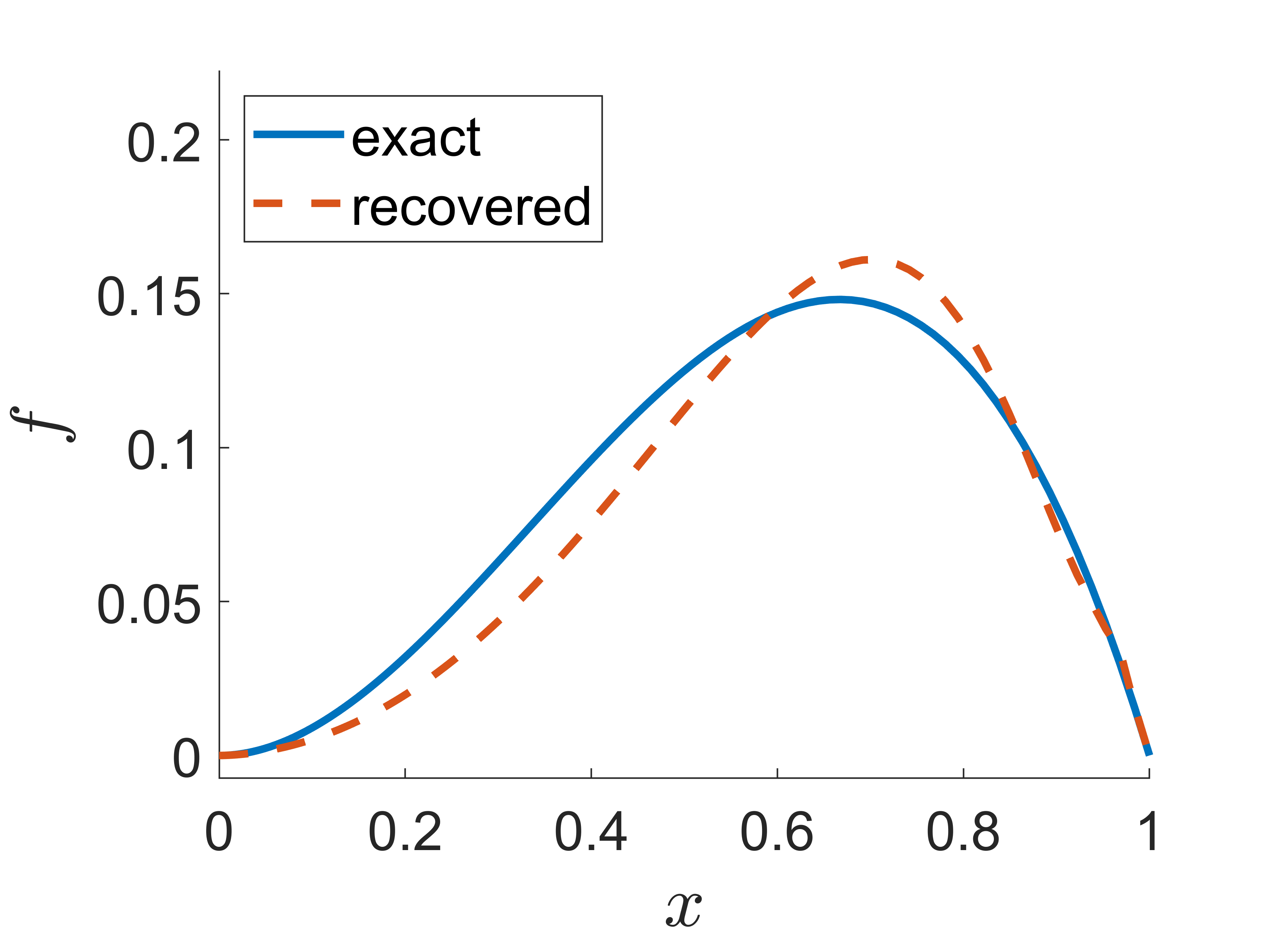} & \includegraphics[width=.32\textwidth]{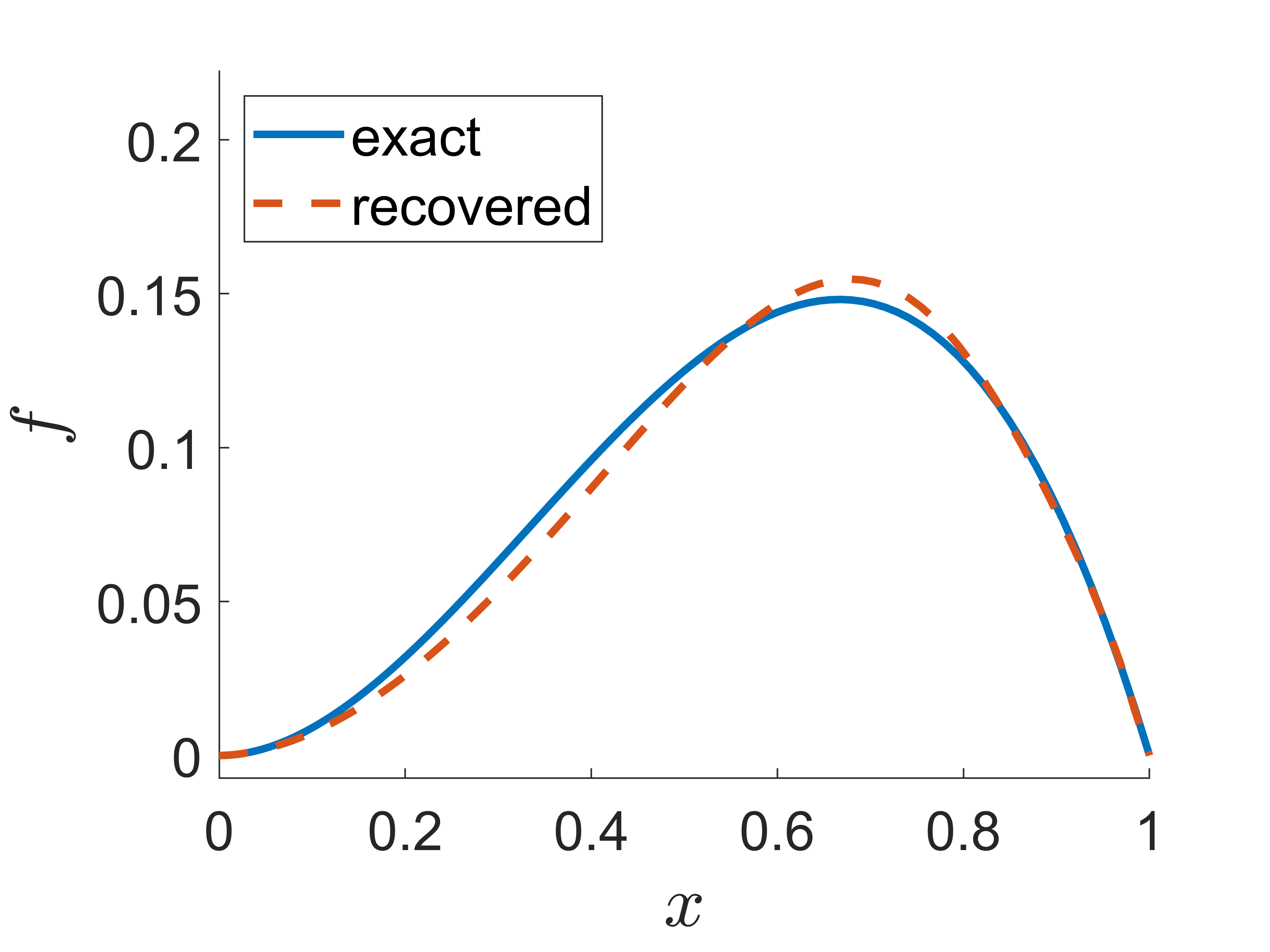} \\
$\beta=1.3$ & $\beta=1.5$ & $\beta=1.7$  
\end{tabular}
\caption{Numerical results for Example~\ref{example:3}. Top row: $\alpha=1.2$; Bottom row: $\alpha=1.4$.}
\label{fig:Ex3}
\end{figure}
We observe in Figure~\ref{fig:Ex3} that, in the regime where the spatial fractional derivative operator and the temporal one have different orders, our method still provides fairly accurate reconstructions. Moreover, similar to observations in Example~\ref{example:2}, for a fixed $\alpha$, the reconstruction becomes more accurate as $\beta$ increases due to stronger well-posedness of the direct problem.
These numerical results suggest that our theoretical results may still hold for general fractional orders $1<\alpha,\beta<2$, while stability of the inverse problem highly depends on the choices of them.

\section{Conclusions}\label{sec-con}

In this paper, we investigate an inverse source problem for a one-dimensional space-time fractional wave equation. We used boundary measurement data at a single endpoint to recover the spatially dependent source term. The main challenge comes from the non-orthogonality of the eigenfunctions for the Dirichlet eigenvalue problem of the spatial fractional derivative. To overcome this, we introduced a bi-orthogonal basis associated with the Mittag--Leffler functions. Crucially, we established uniqueness results for general orders $\alpha,\beta\in (1,2)$, and conditional stability results (H\"older type) specifically when $\alpha=\beta$ lies in $(1,2)$, using the properties of this bi-orthogonal system. For the numerical solution of the inverse source problem, a Tikhonov regularization method was employed. Several numerical examples demonstrated the accuracy and efficiency of the proposed method and validated our theoretical findings.

However, it is important to note the following limitations and open problems.
\begin{enumerate}
    \item Dimensionality: Our analysis is confined to the one-dimensional case. The extension of these results to higher-dimensional fractional wave equations remains an open challenge.
    \item Order Restriction for Stability: The stability result relies on the bi-orthogonal basis constructed for the Mittag--Leffler functions only when $\alpha = \beta$. For other cases ($\alpha \neq \beta \in (1,2)$), establishing stability remains open due to the lack of a suitable bi-orthogonal system.
\end{enumerate}

\section*{Acknowledgments}
Zhidong Zhang is supported by the National Key Research and Development Plan of China (Grant No. 2023YFB3002400). 
Zhiyuan Li thanks the National Natural Science Foundation of China (12271277), and Ningbo Youth Leading Talent Project (2024QL045). This work is partially supported by the Open Research Fund of Key Laboratory of Nonlinear Analysis \& Applications (Central China Normal University), Ministry of Education, China. The work of Z. Zhou is supported by by National Natural Science Foundation of China (Project 12422117), Hong Kong Research Grants Council (15303021), and an internal grant of Hong Kong Polytechnic University (Project ID: P0038888, Work Programme: ZVX3).

\bibliographystyle{abbrv}
\bibliography{refs}

\end{document}